\def\f2{\mathbb F_2}
\def\R{\mathbb R}
\def\l{\mathcal L}
\newcommand\im{\text{im\,}}
\newcommand\s[1]{S^{#1}}
\newcommand\tr{\rm tr}
\newcommand\eg{e.\,g.\,}
\newcommand\ie{i.\,e.\,}
\renewcommand\mod{\text{ mod\ }}
\renewcommand\stop{\mathcal S^{TOP}}
\def\Z{\mathbb Z}
\newcommand\rp[1]{ RP^{#1}}
\newcommand\rprp[1]{RP^{#1}\vee RP^{#1}}
\newcommand\RPRP{\rprp\infty}
\newcommand\rpsum[1]{RP^{#1}\#\,RP^{#1}}
\newtheorem{thm}{Theorem}[section]
\newtheorem{lm}[thm]{Lemma}
\newtheorem{prop}[thm]{Proposition}
 \theoremstyle{remark}
\newtheorem{ex}[thm]{Example}
\newtheorem{rem}[thm]{Remark}
\DeclareMathOperator{\Top}{Top} \DeclareMathOperator{\Aut}{Aut}
\begin{document}

\title{Free involutions on $\s 1\times \s n$}

\author{Bj\o rn Jahren}
\address{Dept. of Math., University of Oslo, Norway}
\email{bjoernj@math.uio.no}
\thanks{The first author gratefully
acknowledges support from the Institut Mittag-Leffler (Djursholm, Sweden)
while part of this work was done}
\author{S\l awomir Kwasik}
\address{Dept. of Math., Tulane University, New Orleans, LA, USA}
\email{kwasik@math.tulane.edu}
\thanks{The second named author would like to thank the University of Oslo 
for the generous support during his visit, November 2007.}

\begin{abstract} Topological free involutions on 
$\s 1\times\s n$  are classified up to conjugation. 
As a byproduct we obtain a new computation of the group of concordance 
classes of homeomorphisms of the projective space $RP^n$.
\end{abstract}

\maketitle
\centerline{\today}

\section{Introduction and linear examples}

The main purpose of this paper is to give a classification of free topological
involutions on $\s 1\times\s n$ up to topological equivalence, for
$n\geqslant3$. Our original motivation came  from the results in
\cite{JK}, part of which can be thought of as an investigation of
the quotients within one homotopy class of such involutions in the
case $n=3$.  See also \cite{BDK} for the case $n>3$.  \par
It is known that the four standard involutions on $\s 1\times \s 2$ 
(defined below) represent all
involutions up to conjugacy, see \cite[Theorem C]{T}. This result can also
be recovered by the methods of the present paper. For $\s1\times\s1$ 
elementary considerations show that there are only two involutions up to 
conjugacy; in this case the four standard involutions reduce to 
just two. (See also \cite{A}.) Hence we will concentrate on the case 
$n\geqslant3$ in this paper. 
\par

The classification proceeds in three steps.  First we determine the
homotopy types that occur as quotients.  Then we use surgery theory to
classify manifolds in these homotopy types, and finally we show that
this is the same as classifying involutions up to conjugacy.  \par

In the first step we show that there are only four possible
homotopy types of quotients: $\s1\times\s n,\ \s1\tilde\times\s
n,\ \s1\times\rp n$ and $\rp {n+1}\#\rp {n+1}$ --- all realized by
simple, linear involutions.  This is one of the main technical
parts of the paper. The methods belong to standard homotopy
theory, but providing all the necessary details requires a considerable 
amount of work.  \par

The second and third steps form the core of the paper. Each of the four
homotopy types of quotients has to be treated separately.  The
first two are very easy and for $\rp {n+1}\#\rp {n+1}$ we mostly
refer to calculations done elsewhere. Most of our work in this part of the 
paper is therefore devoted to a thorough understanding of the case   
$\s1\times\rp n$. Here are some of the main ingredients:\par
$\bullet$ The group
$\pi_0(\Aut( \s1\times\rp n))$ of homotopy classes of homotopy
equivalences of $\s1\times\rp n$ is computed. In doing so we
 calculate $\pi_1(\Aut(\rp n))$, thereby correcting an
incorrect statement in the literature (\cite{Y}).  The main tool
is the Federer--Schultz spectral sequence.\par
$\bullet$ The
action of  $\pi_0(\Aut( \s1\times\rp n))$ on the topological
structure set $\stop(\s1\times\rp n)$ is completely determined.
This depends in part on the results of Ranicki's paper \cite{R}, whose
recent appearence proved very fortunate for us.\par
$\bullet$ For free actions on simply connected
manifolds, there is always a one--one correspondence between
homeomorphism types of quotients and conjugacy classes of actions,
but in the non--simply connected case this is not necessarily
true. (For counterexamples in the two-dimensional case, see \cite{A}.) 
However, we show that it holds in the situation studied
here. Hence the classification of involutions up to conjugacy is
the same as the classification of quotient manifolds up to homeomorphism.
\smallskip

We close this introduction with some basic observations and examples
of free involutions on $\s1\times\s n$. In particular we determine the
possible fundamental
groups of the quotient manifolds.  In section 2 we discuss the homotopy
types of the quotients and do all the calculations in the case $n$ odd.
The necessary modifications for $n$ even are in an appendix at the end
of the paper (section 6).\par
Sections 3 has the surgery classification of quotients up to homeomorphism,
and in section 4 we show that this is the same as the classification of
involutions up to conjugation. Finally, in section 5, we show how the
computation of the group of concordance classes of homeomorphisms of 
$\rp n$ follows from results in section 3.
\medskip

If $\tau:\s1\times\s n\to \s1\times\s n$ is a free
involution,  we let $Q=\s1\times\s n/\tau$ be the quotient manifold and
$p:\s1\times\s n\to Q$ the projection. $Q$ is
orientable if and only if $\tau$ preserves orientation. The following
are two easy observations:

\begin{lm}\label{lm:euler} The Euler characteristic $\chi(Q)=0$.
\end{lm}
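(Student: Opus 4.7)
The plan is to use the multiplicativity of the Euler characteristic under finite covers. Since $\tau$ is free, the projection $p: S^1 \times S^n \to Q$ is a $2$-fold covering map of closed manifolds, so
\[
\chi(S^1 \times S^n) = 2\,\chi(Q).
\]
On the other hand, by the product formula, $\chi(S^1 \times S^n) = \chi(S^1)\cdot\chi(S^n) = 0\cdot\chi(S^n) = 0$. Combining these gives $2\chi(Q)=0$, hence $\chi(Q)=0$.

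There is essentially no obstacle: the argument is a one-line application of the two standard properties of the Euler characteristic (multiplicativity under products and under finite covers), both of which are available in the topological category for closed manifolds (via, for example, a CW structure on $Q$ pulled back to $S^1\times S^n$). Alternatively one can avoid covers entirely and deduce $\chi(Q)=0$ from the fact that $Q$ is a closed odd-dimensional manifold when $n$ is even, and from Poincar\'e duality with twisted coefficients together with the vanishing of $\chi(S^1\times S^n)$ when $n$ is odd; but the covering-space argument is uniform in $n$ and much shorter.
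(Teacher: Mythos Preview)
Your proof is correct and follows exactly the same approach as the paper: the paper's proof is the single line ``This is because $\chi(S^1\times S^n)=2\chi(Q)$ is $0$,'' which is precisely your multiplicativity-under-covers argument, just with the product formula for $\chi(S^1\times S^n)$ left implicit.
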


\begin{proof} This is because $\chi(\s1\times\s n)=2\chi(Q)$ is 0.
\end{proof}

\begin{lm}\label{lm:pi1} If $n\geqslant2$, the fundamental group of $Q$ is 
either $\Z$, $\Z\oplus\Z/2$ or the
infinite dihedral group $D_\infty\cong \Z/2*\Z/2$.
\end{lm}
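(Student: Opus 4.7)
The plan is to exploit that $p\colon\s1\times\s n\to Q$ is a connected double cover and reduce the question to one about group extensions. Since $n\geq 2$ the sphere $\s n$ is simply connected, so $\pi_1(\s1\times\s n)\cong\Z$, and $p$ yields a short exact sequence
$$1\to\Z\to\pi_1(Q)\to\Z/2\to 1.$$
It then suffices to enumerate the possible isomorphism types of the middle group.

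Next, the conjugation action of the $\Z/2$ quotient on the normal subgroup $\Z$ must be one of the two automorphisms of $\Z$, either trivial or negation. For each choice of action, the isomorphism classes of extensions realizing it are classified by $H^2(\Z/2;\Z_\pm)$. A routine computation for the cyclic group $\Z/2$ gives $H^2(\Z/2;\Z)\cong\Z/2$ with trivial coefficients and $H^2(\Z/2;\Z_-)=0$ with sign coefficients, so at most three extensions arise.

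Finally I would identify each of the three extensions with a group from the statement. The sign action contributes only its split extension $\Z\rtimes\Z/2\cong D_\infty$. The trivial action contributes the split extension $\Z\oplus\Z/2$ and a nonsplit extension in which some lift $b$ of the generator of $\Z/2$ satisfies $b^2=a$ for $a$ a generator of $\Z$; in that case $b$ alone generates the whole group and has infinite order, so the group is $\Z$. The three groups are pairwise non-isomorphic by elementary considerations of torsion and commutativity. The only mildly delicate point is this last identification, but it is immediate from the resulting presentation; there is no serious obstacle.
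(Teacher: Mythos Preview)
Your proof is correct. Both you and the paper start from the same key observation---the double covering gives a short exact sequence $1\to\Z\to\pi_1(Q)\to\Z/2\to1$---but you diverge in how you enumerate the possible middle groups. The paper argues directly with generators: it splits into the abelian case (giving $\Z$ or $\Z\oplus\Z/2$) and the nonabelian case, where a lift $x$ of the generator of $\Z/2$ must conjugate $t$ to $t^{-1}$, and a short computation forces $x^2=1$. You instead invoke the cohomological classification of extensions, computing $H^2(\Z/2;\Z)\cong\Z/2$ and $H^2(\Z/2;\Z_-)=0$ to bound the number of extension classes, and then identify each with one of the three groups. Your route is more systematic and generalizes cleanly; the paper's is more elementary and self-contained, requiring no cohomology. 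One small remark: $H^2$ classifies \emph{equivalence classes} of extensions rather than isomorphism types of the middle group, but since you go on to exhibit each middle group explicitly and check they are pairwise non-isomorphic, this causes no trouble.
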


\begin{proof}
By covering space theory $\pi_1(\s1\times\s n)\cong\Z$ is a subgroup of  index
two in $\pi_1(Q)$, hence it is also normal. If $\pi_1(Q)$ is abelian, it is
then either $\Z$ or $\Z\oplus\Z/2$.  If it  is nonabelian, it
has to be generated by a generator $t$ of  $\Z$ and an element $x$ not
in $\Z$, and  we must have $xtx^{-1}=t^{-1}$. It remains to prove that
$x^2=1$.\par Since $x^2$ maps to the identity in $\Z/2$, we have
$x^2=t^m$, for some  integer $m$. But then
$$t^m=x\cdot x^2\cdot x^{-1}= x t^m x^{-1}=(xtx^{-1})^m=t^{-m}\,,$$
hence  $m=0$.
\end{proof}

\begin{ex}\label{ex:lininvol} The simplest involutions have the form 
$\alpha\times\beta$,
a product of involutions on $\s 1$ and $\s n$, at least one of them free.
Examples are $(t,x)\mapsto(-t,x)$ and $(t,x)\mapsto(t,-x)$ and
$(t,x)\mapsto(\bar t,-x)$ with quotients $\s 1\times\s n,\
\s 1\times\rp n$ and $\rp {n+1}\#\,\rp {n+1}$, respectively. Hence all three
fundamental groups are realizable. Notice, however, that the involution
$(t,x)\mapsto(-t,\bar x)$, where $x\mapsto \bar x$ is any
orientation
reversing involution of $\s n$, has quotient equal to the {\em twisted}
 $\s n$--bundle over $\s 1$, denoted $\s 1\tilde\times \s n$,
which also has fundamental group $\Z$.\par
These are what we refer to as the four {\em standard involutions.}
\end{ex}

\begin{rem}\label{rem:hominv} These examples show that the homotopy type of 
the quotient
depends on more that the homotopy class of the involution as a
{\em map}. For example, $(t,x)\mapsto(-t,x)$ and
$(t,x)\mapsto(t,-x)$ are homotopic if $n$ is odd,
$(t,x)\mapsto(t,-x)$ and $(t,x)\mapsto(-t,\bar x)$ are homotopic
if $n$ is even, but the quotients are not homotopy equivalent. For the 
relationship between involutions with homotopy equivalent quotients, see
the remark at the end of section 4.
\end{rem}

In general, we cannot assume that $\tau$ has this simple form. But the induced
homomorphism in homology does.
If $\tau$ is any self-map of $\s 1\times\s n$,
$\tau_*:H_i(\s1\times\s n)\to H_i(\s1\times\s n),\ i=1,n, n+1$ is
multiplication by an integer $d_i$.  Then $d_{n+1}=d_1d_n$ and the Lefschetz
number of $\tau$ is
$$L(\tau)=1-d_1+(-1)^n d_n+ (-1)^{n+1}d_1d_n=(1-d_1)(1+(-1)^nd_n)\,.$$

If $\tau$ is a free involution, it follows that $d_1=1$ or $d_n=(-1)^{n+1}$.
Now recall the transfer homomorphism $\tr: H_*(Q)\to H_*(\s1\times\s n)$.
It has the  property that $p_*\circ\tr$ is multiplication by 2 and
$\tr\circ p_*= 1+\tau_*$.
It follows that up to elements of order two, $H_i(Q)$ is a direct summand of
$H_i(\s1\times\s n)$, and nontrivial --- hence isomorphic to $\Z$ --- if and
only if $d_i=1$.

\begin{ex}\label{ex:neven} In particular, in the case
$\pi_1(Q)=\Z/2*\Z/2$ we must have $d_1=-1$, and hence $d_n=(-1)^{n+1}$.
Thus $Q$ is orientable if and only if $1=d_1d_n= (-1)^n$, \ie $n$ is
{\em even.}
\end{ex}
\smallskip

\section{Homotopy classification.}\label{sec-homclass}

In this section we prove the following\smallskip

\begin{thm}\label{thm:thm-homclass} Let $n\geqslant 2$. For any free involution $\tau$ on $\s 1\times\s n$
the quotient belongs to one of the four homotopy types
$\s 1\times\s n,\ \s 1\tilde\times\s n,\ \s 1\times RP^n$ and
$\rp {n+1}\#\,\rp {n+1}$.
\end{thm}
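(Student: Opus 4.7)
The plan is a Postnikov-tower analysis, treating each of the three possible fundamental groups from Lemma~\ref{lm:pi1} in turn. The preliminary observation is that the universal cover $\tilde Q$ of $Q$ is also the universal cover of the double cover $\s 1\times\s n$, hence $\tilde Q\cong\R\times\s n\simeq\s n$. Consequently $Q$ has $\pi_i(Q)=0$ for $1<i<n$ and $\pi_n(Q)\cong\Z$, and its homotopy type is determined by the triple consisting of $\pi_1(Q)$, the $\pi_1(Q)$-action on $\pi_n(Q)$, and the first $k$-invariant $k\in H^{n+1}(B\pi_1(Q);\Z^w)$, where $w$ is the orientation character; since $Q$ is an $(n+1)$-dimensional CW complex, higher Postnikov invariants impose no further constraints on its homotopy type.

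The case $\pi_1(Q)=\Z$ is immediate: $B\Z=\s 1$ has no cohomology in degree $n+1\geq 3$, so the $k$-invariant vanishes automatically, and the two possible $\pm 1$ actions on $\pi_n(Q)$ yield exactly the two standard $\s n$-bundles over $\s 1$, namely $\s 1\times\s n$ and $\s 1\tilde\times\s n$.

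For $\pi_1(Q)=\Z\oplus\Z/2$ and $\pi_1(Q)=D_\infty$ the argument is substantive. My plan is, first, to compute $H^{n+1}(B\pi_1(Q);\Z^w)$ for each orientation character $w$ allowed by Example~\ref{ex:neven} and the constraints on the degrees $d_1,d_n$, using the K\"unneth decomposition $B(\Z\oplus\Z/2)=\s 1\times\rp{\infty}$ in the first case, and a Mayer--Vietoris or Lyndon--Hochschild--Serre calculation based on the amalgamated product structure of $BD_\infty$ in the second. Next, the double cover $\s 1\times\s n\to Q$ corresponds to the index-$2$ infinite cyclic subgroup of $\pi_1(Q)$, so restriction of the $k$-invariant of $Q$ to this subgroup must recover the (vanishing) $k$-invariant of $\s 1\times\s n$. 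Combining this with the action data and the residual freedom coming from self-equivalences of the $1$-stage Postnikov section should force the $k$-invariant of $Q$ to agree with that of the standard example --- $\s 1\times\rp n$ or $\rpsum{n+1}$ respectively --- and hence $Q$ is homotopy equivalent to the corresponding model.

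The main technical obstacle is the $k$-invariant computation for $D_\infty$ with twisted coefficients, together with the parity-dependent bookkeeping of the orientation character. The latter presumably accounts for the paper's decision to carry out the main calculation for $n$ odd in this section and defer the modifications for $n$ even to an appendix.
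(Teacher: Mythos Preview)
Your Postnikov-tower strategy is genuinely different from the paper's approach, which in Case~2 fibers $Q$ over $S^1$ and identifies the homotopy fibre with $\rp n$ via a Gysin-sequence argument (Lemma~\ref{lm:htpn}), and in Case~3 builds an explicit map $\rp n\vee\rp n\to Q$ by obstruction theory and shows it extends to a homotopy equivalence $\rpsum{n+1}\to Q$ (Lemma~\ref{lm:case3}).

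The proposal, however, has a real gap: your mechanism for determining the $k$-invariant does not work. Restriction along the index-two infinite cyclic subgroup lands in $H^{n+1}(B\Z;\pi_n)=H^{n+1}(S^1;\pi_n)=0$, so it vanishes automatically and imposes no constraint on $k$ whatsoever. The ``residual freedom from self-equivalences of the $1$-stage'' cannot help either: for instance when $\pi_1=\Z\oplus\Z/2$, $n$ odd, and the action is trivial, one has $H^{n+1}(S^1\times\rp\infty;\Z)\cong\Z/2$, and no automorphism of $\pi_1$ or $\pi_n$ can identify the two elements. What actually forces $k\neq0$ is the finite-dimensionality of $Q$: if $k=0$, the Serre spectral sequence of $\tilde Q\simeq S^n\to Q\to B\pi_1$ degenerates and $H^*(Q;\f2)$ is nonzero in arbitrarily high degrees. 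This is precisely the engine behind the paper's Lemma~\ref{lm:htpn}, so once repaired your argument and the paper's converge at the key step. There is a secondary gap as well: you list the $\pi_1$-action on $\pi_n$ as input but never pin it down, and the double-cover constraint alone does not suffice --- for $\pi_1=\Z\oplus\Z/2$ and $n$ odd, two inequivalent characters still satisfy it. The paper handles the analogous issue by checking directly that $S^1\tilde\times\rp n$ has no double cover homotopy equivalent to $S^1\times S^n$.
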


\begin{proof}
For the proof we consider separately each of the three fundamental groups given
by Lemma \ref{lm:pi1}.\medskip

{\em Case 1}: $\pi_1(Q)\cong \Z$.\smallskip

This is the simplest case.  Let $\rho:Q\to \s 1$ be a map inducing
an isomorphism on $\pi_1$.  Then $\rho$ classifies the universal
covering of $Q$, hence the homotopy fiber is $\s n$.  Thus $Q$ has the
homotopy type of the total space of an $\s n$-fibration over $\s 1$, and
there are only two such homotopy types --- $\s 1\times\s n$ and
$\s 1\tilde\times \s n$.
\medskip

{\em Case 2}: $\pi_1(Q)\cong \Z\oplus \Z/2$.\smallskip

This time let  $\rho:Q\to \s 1$ be a map inducing a {\em surjection} on
$\pi_1$.  Now $\rho$ has homotopy fiber equivalent to the quotient 
$\overline Q$
of a free involution on $\R\times\s n$.\par

\begin{lm}\label{lm:htpn} Let $\tau$ be a free involution on a finite
 dimensional space $X$ which has the homotopy type of $\s n$.
Then the quotient space $Y=X/\tau$ is homotopy equivalent to $\rp n$.
\end{lm}

\begin{rem}\label{rem-htpn} This is a generalization of the well--known
fact that the
quotient of a free involution on $S^n$ has the homotopy type of $\rp n$.
(See \eg \cite{O},\,\cite{W}.) \cite{TW} contains a proof for $n$ odd.
Note that finite dimensionality is
necessary: an example is $\s\infty \times\s n$ with the
antipodal action on the first factor. 
\end{rem}

\noindent{\em Proof of Lemma \ref{lm:htpn}.}
There is a  fibration (up to homotopy) $X\to Y \to \rp \infty$, and since
$X\simeq \s n$, the usual argument gives a Gysin sequence
$$\cdots\to H_k(Y)\to H_k(\rp\infty)\xrightarrow{\mu_k}
H_{k-n-1}(\rp\infty,H_n(X))\to H_{k-1}(Y)\to\cdots$$

The third term may have twisted coefficients, in which case the homology has
a $\Z/2$ in every even and a zero in every odd dimension. (See Appendix.)
$Y$ has finite dimension $m$, say, hence it has no homology in
degrees above $m$.  It follows that $\mu_k$ must be an isomorphism
for $k$ large.  This can only happen if the coefficients are
twisted for $n$ even and untwisted for $n$ odd. It is now easy to see that
$Y$ has the same homology as $\rp n$ and that the map
$Y\to \rp \infty$ induces the ``same'' map in homology as
$\rp n\subset \rp \infty$. In fact, $\mu_k$ is cap product with a
``Thom class'' in $H^{n-1}(\rp\infty,H^n(X))$, hence must be an isomorphism
whenever $k-n-1>0$. \smallskip

Obviously $Y$ and $\rp n$ also have isomorphic {\em homotopy} groups in
all dimensions. By obstruction theory we see that there is a map
$\rp n\xrightarrow{f} Y$, unique up to homotopy on $\rp {n-1}$ and
inducing an isomorphism on $\pi_i,\ i<n$. All we have to do is to show that
we can choose the map on the top cell of $\rp n$ such that
$f_*: \pi_n(\rp n)\to \pi_n(Y)\cong \Z$ is an isomorphism.\smallskip

With chosen generators for the two
$\pi_n$'s, $\ f_*$ is multiplication by an integer, and we want to show that
we can realize $f_*=1$. The different choices of
extensions of $f$ from $\rp {n-1}$ to $\rp n$ are parametrized by
$\pi_n(Y)$, and an element represented by $h:\s n\to Y$
changes $f$ to the composition
$$\rp n\xrightarrow{\bigtriangledown}\rp n\vee\s n\xrightarrow{f\vee h}Y\,,$$
where $\bigtriangledown$ pinches the boundary of an embedded $n$--disk to
a point. This has the effect of adding an {\em even} number to $f_*$, as is
seen from the diagram:
$$ \xymatrix{
\s n \ar[r]\ar[d]& \s n\cup_{\s 0\times *}\s o \times\s n \ar[r]\ar[d]&
X\ar[d] \\
\rp n \ar[r]^-\bigtriangledown& \rp n\vee \s n \ar[r]^-{f\vee h}& Y
}$$

\noindent where the vertical maps are double coverings and the upper
horizontal maps are the unique base-point preserving liftings.
\smallskip

We now finish the proof of Lemma \ref{lm:htpn} by showing that \smallskip

{\em $f_*:\pi_n(\rp n) \to \pi_n(Y)$ is always multiplication by an
{\em odd} number.}\smallskip

In fact, the map $Y\to \rp \infty$ factors (non--uniquely) through
$\rp n$, since all obstructions to this lie in vanishing groups
($H^k(Y,\pi_{k-1}(\s n)$)). The composite $\rp n\to Y\to \rp n$ is an
isomorphism on $\pi_1$, and any such self--map of $\rp n$ induces
multiplication by an odd number on $\pi_n$.\qed
\medskip

It follows that  $Q$ must be of the homotopy type of an $\rp n$--fibration
over $\s 1$.  If $n$ is even, this fibration must be trivial, since
$\pi_0(\Aut(\rp n))=0$, where $\Aut(M)$ denotes the space of
homotopy self--equivalences of $M$.  If $n$ is odd, $\pi_0(\Aut(\rp n))$ has
two elements,
represented by the identity and reflection  in an $\rp {n-1}$. Hence
$Q$ must be of the homotopy type of either $\s 1\times\rp n$ or a twisted
product $\s 1\tilde\times\rp n$. But it is not difficult to
see that the latter does not have a double covering homotopy equivalent to
$\s 1\times \s n$. In fact, the double coverings are classified by the
index two subgroups of $\pi_1(Q)\cong \Z\oplus\Z/2$. There are three such
subgroups --- two isomorphic to $\Z$ and one isomorphic to $\Z\oplus\Z/2$.
The first two are mapped to each other by an isomorphism of $\pi_1(Q)$
induced by a homotopy equivalence of $Q$. This lifts to a homeomorphism of the
double coverings, which then both have to be $\s 1\tilde\times\s n$. The last
subgroup corresponds to the product of the double covering of $\s 1$
and the identity map of $\rp n$.\medskip

{\em Case 3}:  $\pi_1(Q)\cong \Z/2*\Z/2$.\smallskip

Let $\rho: Q\to \rp\infty\vee\rp\infty$ be a $\pi_1$--isomorphism.  The two
copies of $\rp\infty$ determine two generators of $\pi_1(Q)$ and we
 let $g':\s 1\vee\s 1\to Q$ be the wedge of maps representing
these two generators.

$\pi_i(Q)$ is trivial for $1<i<n$, $\Z$ for $i=n$ and $\Z/2$ for $i=n+1$.
Using the obvious cell structure on $\rp n\vee \rp n$ with $\s 1\vee\s 1$
as 1-skeleton, it then follows by obstruction theory that $g'$ extends to a
map $g: \rp{n}\vee \rp{n}\to Q$ which is an isomorphism on $\pi_i$ for
$i<n$. Moreover, $g|\rp{n-1}\vee \rp{n-1}$ is unique up to homotopy and
$\rho\circ g$ is homotopic to the natural inclusion.
\smallskip

$\rp {n+1} \#\,\rp {n+1}$ is obtained from  $\rp n\vee  \rp{n}$ by
adding  an $(n+1)$-cell which is attached by the map
$\s {n}\to \s {n}\vee \s {n}\to \rp{n}\vee \rp{n}$.  Here
the first arrow is the usual pinch map and the second is the wedge of the two
canonical double covers. We want to show that $g$ can be chosen such that it
extends to a homotopy equivalence $\rp{n+1}\vee\rp{n+1}\to Q$.
\smallskip

The extensions of $g$ from $\rp{n-1}\vee\rp{n-1}$  to $\rp{n}\vee \rp{n}$ are
parametrized by $\pi_{n}(Q)\oplus \pi_{n}(Q)\cong \Z\oplus\Z\,,$
\noindent in the sense that this group acts transitively on the set of
homotopy classes of extensions.  To describe the action and the effect of it,
let $g_1,\, g_2$ be the restrictions of $g$ to the two $\rp n$'s.
Then ${g_i}_*:\pi_n(\rp n)\to \pi_n(Q)$ is multiplication by an integer $a_i$.
{\em Case 3}  now follows immediately from the following Lemma:

\begin{lm}\label{lm:case3}
\begin{enumerate}
\item The numbers $a_1$ and $a_2$ are always odd.
\item $g$ can be extended to $\rp {n+1} \#\,\rp {n+1}$ if and only if
   $a_1+a_2=0$, and the extension is a homotopy equivalence if and only if
   $a_1=-a_2=\pm 1$.
\item The action of an element $(k_1,k_2)\in \pi_{n}(Q)\oplus \pi_{n}(Q)$
replaces $(a_1,a_2)$ by $(a_1+2k_1,a_2+2k_2)$
\end{enumerate}
\end{lm}

\noindent{\em Proof of the Lemma.}
The proof of (3) is exactly like a similar statement in the discussion of
Case 2,  applied to each summand.  To prove (2), observe that the extension
exists if and only if the composite map
$$\s {n}\to \s {n}\vee \s {n}\to \rp{n}\vee \rp{n}\xrightarrow{a_1\vee a_2}Q$$

\noindent is trivial. But this composition represents $a_1+a_2$ in $\pi_n(Q)$.
The second part of (2) follows from the observation that both of the inclusions
$\rp n\to\rp{n+1}\#\,\rp{n+1}$ induce isomorphisms on $\pi_n$. Therefore
the condition  $a_1=-a_2=\pm 1$ is equivalent to the statement that
$g_* : \pi_i(\rp {n+1} \#\,\rp {n+1})\to \pi_i(Q)$ is iso for $i\le n$, and
since both universal coverings are $\R\times \s n$, it must be iso for
 all $i$.\par
It remains to prove (1). We start with a cohomology calculation.
$\rho$ is trivially $n$--connected, hence iso on $H_i$, $i<n$. This determines
the remaining homology of $Q$ as follows:

If $n$ is {\em even}, $Q$ is orientable, and the rest is given by Poincar\'e
duality: $H_{n+1}(Q)\cong \Z$ and $H_n(Q)=0$. This means that we cannot
detect anything using $H_n$.\par
If $n$ is {\em odd}, however, $Q$ is nonorientable; hence $H_{n+1}(Q)=0$ and
$H_n(Q)\cong\Z/2\,\,\oplus$ a free summand.  But the rank of this summand
must be 1 since the Eulercharacteristic is 0. Thus
$H_n(Q)\cong \Z\oplus\Z/2$.
Moreover, $\rho_*:H_n(Q)\to H_n(\rp \infty\vee\rp\infty)$ is onto, since the
composition $H_n(\rp n\vee\rp n)\to H_n(Q)\to H_n(\rp \infty\vee\rp\infty)$
is.
\par

\smallskip

We also immediately get the ring structure on the mod 2 cohomology
$H^*(Q,\f2)$. In fact, it
follows from the calculations above that both the maps in the composition
 $$H^i(\rp\infty\vee\rp\infty,\f2)\to H^i(Q,\f2)\to H^i(\rp n\vee\rp n,\f2)$$

\noindent are isomorphisms for all $i\le n$. This fact and  Poincar\'e
duality
(for the top dimension) means that the cohomology ring is given as
\begin{equation}
H^*(Q,\f2)\cong \f2[x,y]/(xy, x^{n+1}-y^{n+1})\,.
\end{equation}

The elements $x$ and $y$ are images under $\rho_*$ of the generators for 
the mod 2 cohomology rings of the two copies of $\rp\infty$.
\smallskip

We now finish the proof of (1) in the Lemma, end hence also Theorem 2.1.
Assume first $n$ odd.  It turns out that we can get rid of some
extraneous torsion by dualizing, and this doesn't change the map we are
interested in, ${g_i}_*: \pi_n(\rp n)\to \pi_n(Q)$. So, let
$A^*= Hom(A, \Z)$, for an abelian group $A$, and consider the diagram

\begin{equation}
\xymatrix{
\llap{$\Z \cong\,$} \pi_n(Q)^*\ar[r]^{a_i} &
                               \pi_n(\rp n)^*\rlap{$\, \cong\Z$} \\
\llap{$\Z  \cong\,$} H_n(Q)^*\ar[r]^{{({g_i}_*)}^*}\ar[u]^{{h_Q}^*} &
                      H_n(\rp n)^*\rlap{$\, \cong\Z$} \ar[u]_{{h_P}^*}\\
\llap{$\Z \cong\,$} H^n(Q)\ar[r]^{{g_i}^*}\ar[u]^\cong \ar[d]_{\gamma} &
                   H^n(\rp n)\rlap{$\,\cong\Z$} \ar[u]_\cong \ar@{->>}[d]\\
\llap{$\f2^2 \cong$} H^n(Q,\f2)\ar@{->>}[r]^{{g_i}^*} &
                                H^n(\rp n,\f2) \rlap{$\,\cong \f2$}
}
\end{equation}

$h_P$ and $h_Q$ are Hurewicz maps and the bottom vertical maps are reductions
of coefficients. We know that $h_P$ is multiplication by 2, so  (3) will
follow if we can prove\smallskip

(i) ${h_Q}^*$ is multiplication by an even number, and\par
(ii) ${{g_i}^*}\circ\gamma$ is surjective.
\smallskip

To prove (i), we go back to $h_Q:\pi_n(Q)\to H_n(Q)$ and compare with
the Hurewicz map for the universal covering $\R\times\s n\,$:

$$
\xymatrix{
\pi_n(Q)\ar[d]_{h_Q} & \pi_n(\R\times\s n)\ar[l]_\cong\ar[d]^\cong\\
\llap{$\Z\oplus\Z/2\cong\,$} H_n(Q) & H_n(\R\times \s n)\ar[l]_{p_*}
\rlap{$\,\cong\Z$}
}$$

$p_*(1)$ maps to 0 in $H_n(\rp\infty\vee\rp \infty)$
since $\rho\circ p$ factors through the universal covering of
$\rp\infty\vee\rp \infty$, which is contractible. But
$\rho_*: H_n(Q)\to H_n(\rp\infty\vee\rp \infty)$ is surjective, so this means
that $p_*(1)\in \Z\oplus\Z/2$ must have the form $(2k,l)$.  (1) follows
from this.\smallskip

(ii) uses the cohomology calculation (1).  The image of $\gamma$ must
be nontrivial, and there are just three nontrivial elements in
$H^n(Q,\f2)$\,: ${x_1}^n,\ {x_2}^n  $ and ${x_1}^n+ {x_2}^n$.
Since $n$ is odd, ${\rm Sq}^1(x_1)^n= {\rm Sq}^1(x_2)^n\ne 0$ and
${\rm Sq}^1(x_1^n+x_2^n)=0$, Hence the image of  $\gamma$ must contain
$x_1^n+x_2^n$. But  ${g_i}^*(x_1^n+x_2^n)$ is the generator of
$H^*(\rp n,\f2)$ for both indexes $i$.\smallskip

If $n$ is even we have to use homology with local coefficients.
 $\rp\infty\vee\rp\infty$ has the coefficient system which is twisted by the
nontrivial character on both summands $\rp\infty$, and the other spaces
involved come with maps to $\rp\infty\vee\rp\infty$ and hence also with
induced coefficient systems. With these coefficient systems the
homology/cohomology calculations go just as before. For more details,
see the Appendix, section 6.
\end{proof}
\smallskip

\section{Surgery calculations}\label{surgcalc}\label{section:surgery}\smallskip

To find all manifolds in each of the homotopy types given by Theorem
\ref{thm:thm-homclass}
we use surgery theory. In fact, all the surgery groups involved are
completely known, and all the terms in the surgery exact sequences are
easily computable. Moreover, all groups are ``small'' in the sense of 
\cite{FQ}, such that topological surgery also works in dimension four. 
Hence, with modifications which will be explicitly pointed out, all 
the results in  this section will be valid for $n\geqslant3$. 
Note that all the Whitehead groups involved are
trivial, so we will write just $L$ for the surgery groups $L^s=L^h$.\smallskip

Most of this section will consist of a detailed study of the case
$\pi_1(Q)\cong \Z\times\Z/2$.  $\pi_1(Q)\cong \Z$ is easy, and for
the case $\pi_1(Q)\cong D_\infty$ we will refer to \cite{BDK} and \cite{JK}.
\medskip

\subsection{$Q\simeq\s1\times \s n \ \rm{or}\ \s1\tilde\times\s n$}
The relevant part of the surgery exact sequence is
$$
 [\Sigma(Q_+), G/Top\,]\xrightarrow{\theta'} L_{n+2}(\Z,\omega)\to
\mathcal S^{TOP}(Q)\to [Q,G/Top\,]\xrightarrow{\theta} L_{n+1}(\Z,\omega)\,,
$$
where $\omega:\Z\to \{\pm 1\}$ is the orientation character. A case by case
check ($n\equiv 0,1,2,3\mod4$) reveals that $\theta$ is a bijection
and $\theta'$ is a surjection for all $n\geqslant3$.
 Therefore $\mathcal S^{TOP}(Q)$ has only one element, and
$Q$ is unique up to homeomorphism.  \medskip

\subsection{ $Q\simeq \rp{n+1}\#\rp{n+1}$}  This case has been dealt with
in \cite {BDK} and \cite {JK}. Here we only describe the most interesting 
qualitative results.\par 
For $n\equiv 0$ or 1 mod 4 there are only finitely many
homeomorphism types, all of which are obtained as connected sums
of fake projective spaces. If $n\equiv 3\,$ mod 4 there is an
infinite number of distinct such connected sums, and  when  $n\equiv
2$ or 3 mod 4 there are also infinitely many mutually non--homeomorphic 
examples that can
{\em not} be split as connected sums. These manifolds exist because
of the appearence of big  $U\!Nil$--groups in these dimensions. 
 For more precise statements and calculations, see \cite{BDK}.
(Note, however, that surprisingly little is known about the geometry of 
these manifolds, in particular for $n=3$. See Remarks in \cite[p.251]{JK}).
\medskip

\subsection{$Q\simeq\s1\times\rp n$}
 This will take up the rest of this rather long
section.  We will here make essential use of the fact that the topological
surgery sequence can be considered a natural  exact
sequence of abelian groups if we identify $[M^m/\partial M^m,G/Top\,]$
with $h_m(M,\underline{\mathcal L})$, where $\underline{\mathcal L}$ is the
surgery spectrum. (Note that if $M$ is non--orientable, this homology has
twisted coefficients.) The geometric meaning of this algebraic structure is
still quite mysterious, but \cite {R} goes a long way towards explaining the
functoriality --- at least with respect to homotopy equivalences.
 The neutral 
element of $\stop(M)$ is represented by the identity map on $M$.\smallskip

For $\s1\times\rp n$ we get
\begin{multline}\label{eq:surgpn}
\cdots \to[\Sigma(\s1\times\rp n_+),G/Top\,]\xrightarrow{\theta'}
L_{n+2}(\Z \oplus\Z/2,\omega)\to\\
\to\mathcal S^{TOP}(\s1\times\rp n)
\to[\s1\times\rp n,G/Top\,]\xrightarrow{\theta} L_{n+1}(\Z\oplus\Z/2,\omega)
\end{multline}
The orientation character $\omega$ is nontrivial (isomorphism on $\Z/2$)
if $n$ is even, and trivial if $n$ is odd.
The $L$--groups split as
\[L_*(\Z \oplus\Z/2,\omega)\cong L_*(\Z/2,\omega)
\times L_{*-1}(\Z/2,\omega)\,,\]
compatibly with splittings
\[ [\s1\times\rp n,G/Top\,]\cong[\Sigma\rp n_+,G/Top\,]
\times[\rp n,G/Top\,]\] and
\[[\Sigma(\s1\times\rp n)_+,G/Top\,]\cong
[\Sigma^2\rp n_+,G/Top\,]\times[\Sigma\rp n_+,G/Top\,]\]
of the set of normal invariants. In fact, these splittings extend to a
splitting of the whole surgery sequence
 (\ref{eq:surgpn}):

\begin{lm}\label{lm:surgpn}
There is a splitting 
\[\mathcal S^{TOP}(\s1\times\rp n)\approx
\mathcal S^{TOP}(I\times\rp n) \times\mathcal S^{TOP}(\rp n)\,,\] and the 
surgery sequence (\ref{eq:surgpn})
splits as a product of the surgery sequences for $I\times\rp n$ and $\rp n$.
\end{lm}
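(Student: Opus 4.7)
The plan is to derive the splitting of the structure set from compatible splittings of the outer terms (normal invariants and $L$-groups) in (\ref{eq:surgpn}), using Ranicki's algebraic realization of the topological surgery sequence as a natural exact sequence of abelian groups (\cite{R}). The two stated splittings of the outer terms both arise from the same geometric data: choose a basepoint $* \in \s 1$, which gives the inclusion $j : \rp n \cong \{*\} \times \rp n \hookrightarrow \s 1 \times \rp n$ and, using $\s 1 = I/\partial I$, the collapse $c: \s 1 \times \rp n \to (\s 1 \times \rp n)/(\{*\}\times \rp n) \simeq \Sigma \rp n_+$. The identification $\Sigma \rp n_+ \simeq (I \times \rp n)/\partial(I \times \rp n)$ then makes $[\Sigma \rp n_+,G/Top\,]$ into the normal invariants for the bounded manifold $I \times \rp n$. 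At the $L$-theory level, $j$ induces the inclusion $\Z/2 \hookrightarrow \Z\oplus\Z/2$ on fundamental groups, and the complementary summand is Shaneson's splitting in the $\s 1$-direction; both summands of the outer $L$-groups in (\ref{eq:surgpn}) are thereby matched with the $L$-groups appearing in the surgery sequences for $\rp n$ and $I\times \rp n$, respectively.

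Next, I would invoke \cite{R} to promote these maps to maps of the entire surgery sequence. Ranicki's work realizes $\stop(M)$ as a homotopy group of a spectrum (the cofibre of an assembly map) and renders the full sequence functorial in a category large enough to contain $j$ and the transverse cutting operation dual to $c$. One thereby obtains a diagram in which the surgery sequences for $\rp n$ and for $(I\times \rp n,\partial)$ both map into that of $\s 1 \times \rp n$ and compose back (via restriction--to--fibre and cutting--along--fibre) to the identity on each. Combined with the already established splittings of the outer terms, a term--by--term five lemma then forces
\[
\stop(\s 1\times \rp n) \approx \stop(\rp n) \times \stop(I\times \rp n),
\]
and exhibits (\ref{eq:surgpn}) as the product of the two smaller surgery exact sequences.

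Geometrically, this corresponds to the familiar construction: given a representative $f : M \to \s 1\times \rp n$, one makes $f$ transverse to $\{*\}\times \rp n$; the preimage $f^{-1}(\{*\}\times \rp n)$ yields a structure on $\rp n$, while cutting $M$ along this preimage yields a structure on $I\times \rp n$ rel boundary. The main obstacle is establishing that the splittings on normal invariants and $L$-groups are truly compatible with the surgery obstruction maps $\theta$ and $\theta'$: the projection $\s 1\times \rp n \to \rp n$ is not a homotopy equivalence, so one cannot rely on elementary invariance of structure sets under equivalences, and it is precisely here that Ranicki's refined algebraic functoriality of the surgery exact sequence is essential.
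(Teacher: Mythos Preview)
Your proposal is correct and follows essentially the same route as the paper: establish the compatible splittings of the normal invariant groups and the $L$--groups (which the paper records just before the lemma), use the abelian group structure on $\stop$ coming from Ranicki's algebraic theory to produce a single map of exact sequences from the product of the two smaller surgery sequences into (\ref{eq:surgpn}), and conclude by the five--lemma. The paper's own proof is terser---it simply asserts an ``obvious map'' $\stop(I\times\rp n)\times\stop(\rp n)\to\stop(\s1\times\rp n)$ compatible with the outer splittings and invokes the five--lemma---whereas you spell out the geometric origin of the maps (inclusion of a fibre, collapse to $\Sigma\rp n_+$, Shaneson splitting) and worry explicitly about compatibility with $\theta,\theta'$; but the underlying argument is the same.
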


\begin{rem} When $n=3$, $\stop(\rp n)$ has to be interpreted as in 
\cite{KT} and \cite{JK1}.  Then the lemma and its proof are valid also in 
this dimension.
\end{rem}

\begin{proof}[Proof of Lemma \ref{lm:surgpn}] Note that when $M$ has 
boundary, $\mathcal S^{TOP}(M)$ denotes 
the set of equivalence classes of homotopy structures which are 
homeomorphisms on the boundary.  Using the group structure, we have 
an obvious map
$$\mathcal S^{TOP}(I\times\rp n) \times\mathcal S^{TOP}(\rp n)\to
\mathcal S^{TOP}(\s1\times\rp n)\,,$$
compatible with the isomorphisms above, and the result follows by
the five--lemma.  
\end{proof}

\begin{rem}
A topological version of  Cappell's splitting theorem \cite{C} provides a 
splitting map
$\rho:\mathcal S^{TOP}(\s1\times\rp n)\to \mathcal S^{TOP}(\rp n)$.
\end{rem}

Observe that the surgery sequence for $I\times\rp n$ is part of the sequence
for $\rp n$ (``forget the last three terms''):
\begin{multline}
\cdots[\Sigma^2(\rp n_+),G/Top\,]\xrightarrow{\theta''}
L_{n+2}(\Z/2,\omega)\to \\
\to\mathcal S^{TOP}(I\times\rp n)\to[\Sigma(\rp n_+),G/Top\,]
\xrightarrow{\theta'} L_{n+1}(\Z/2,\omega)\to\\
\to\mathcal S^{TOP}(\rp n)\to [\rp n,G/Top\,]
\xrightarrow{\theta} L_n(\Z/2,\omega)
\end{multline}

The $L$--groups occurring here are as follows (for 
$n\equiv 0, 1, 2, 3\mod4$) \cite{W}\,:
\begin{eqnarray*}
&L_{n}(\Z/2,\omega)&=\ \Z/2,\, 0,\,\Z/2,\, \Z/2\\
&L_{n+1}(\Z/2,\omega)& =\ 0,\, \Z/2,\,0,\,\Z^2\\
&L_{n+2}(\Z/2,\omega)&=\ \Z/2,\,\Z/2,\,\Z/2,\,0
\end{eqnarray*}

The usual cohomology calculation gives
\begin{eqnarray*}
&[\rp n,G/Top\,] &\cong\ (\Z/2)^{[\frac n2]}\\
&[\Sigma(\rp n_+),G/Top\,] &\cong\ (Z/2)^{[\frac{n+3}4]}\times K_n\,,
\end{eqnarray*}
where  $K_n=\Z$ if $n\equiv3\mod4$ and $K_n=0$ otherwise.\par
The surgery obstruction maps $\theta$ and $\theta'$ have been computed, 
\eg in \cite[Ch.\,IV]{dM}.
L\'opez de Medrano treats the $PL$ case, but the results are essentially
the same in the topological case. Note, however, that each $\Z/4$ in his 
Theorem IV.3.4 is to be replaced by $(\Z/2)^2$ in the topological case. 
\smallskip

The result is:
\begin{itemize}
\item $\theta$ is surjective, and
\item $\text{coker}\,\theta'\cong K_n$, and it sits as
a direct summand of $\stop(\rp n)$.
\end{itemize}

More precisely: The $K_n$--factor of $[\Sigma(\rp n_+),G/Top\,]$ (for 
$n\equiv3\mod4$) maps isomorphically onto one of the $\Z$-factors of 
$L_{n+1}(\Z/2,\omega)$, and the cokernel is again isomorphic to $K_n$.\par
Since $[\rp n,G/Top\,]$ is a product of $\Z/2$'s, this determines 
$\mathcal S^{TOP}(\rp n)$ completely. 
Clearly, every element in $\mathcal S^{TOP}(\rp n)$ gives rise to an 
involution on $\s1\times\s n$ of the form  $\rm{id}_{\s1}\times\tau$, 
where $\tau$ is a free involution on $\s n$. \smallskip

We can also determine the last surgery obstruction map, $\theta''$; 
 we claim that it is always surjective\,:\par
When $n$ is even, $L_{n+2}\cong\Z/2$ is detected by the Arf
invariant and can be easily realized by the product formula.\par 
If $n\equiv 3\,\mod4$, $L_{n+2}=0$, so there is nothing to prove.
In the remaining case, $n\equiv 1\,\mod4$, the obvious maps
$\Z/2\cong L_2(1)\xrightarrow{\times\Z}L_3(\Z)\to L_3(\Z/2)$ are all
isomorphisms (\cite[ch.\,13A]{W}).  The non--trivial element can be 
realized on a neighborhood of $\{0\}\times\rp1\subset D^2\times\rp n$.
\smallskip

This also determines $\stop(I\times\rp n)$, and for later use 
(section \ref{section:conc}) we record
\begin{equation}\label{eq:pnxi}
\stop(I\times\rp n)\cong (\Z/2)^{[\frac{n+2}4]}\,.
\end{equation}

(The exponent is $[\frac{n+3}4]$ for $n\not\equiv1\mod 4$
and $[\frac{n+3}4]-1$ for $n\equiv1\mod 4$, and one easily checks that 
both can also be written as $[\frac{n+2}4]$.)\par
We  remark that,  by the $s$--cobordism theorem, an element in
$\ \stop(I\times\rp n)\subset \stop(\s1\times\rp n)$ can be represented
by the mapping torus of a (degree one) homeomorphism $h\!:\!\rp n\to \rp n$,
together  with a homotopy between $h$ and the identity. \medskip

Putting all this together, we see that the exact sequence (\ref{eq:surgpn}) 
is reduced to a sequence of the form
\begin{equation}\label{eq:surgpnred}
0\to K_n\to\stop(\s1\times\rp n)\xrightarrow{\eta}(\Z/2)^{[\frac n2]}\times
 (Z/2)^{[\frac{n+3}4]}\to\Z/2\to0\,,
\end{equation}
where $K_n$ splits off $\stop(\s1\times\rp n)$ as a direct summand.\smallskip

It follows that
\begin{equation}\label{stop}
\stop(\s1\times\rp n)\cong (\Z/2)^{[\frac n2]+[\frac{n-1}4]}\times K_n\,.
\end{equation}
\smallskip

Our goal is to determine the set of {\em homeomorphism classes}  of manifolds
homotopy equivalent to $\s1\times\rp n$.  This is the quotient of the
structure set $\stop(\s1\times\rp n)$ by the group
$\pi_0(\Aut(\s1\times\rp n))$ of homotopy classes of homotopy equivalences
of $\s1\times\rp n$, acting by
composition. Hence we first need to compute this group.

\begin{prop}\label{prop:autpn} Let $n\geqslant 2$. Then
$$\pi_0(\Aut(\s 1\times\rp n))\cong
\begin{cases} (Z/2)^3 & \text{if $n$ is even}\\
 (Z/2)^3\times\Z/4 & \text{if \ }n\equiv1\mod 4\\
(\Z/2)^5 & \text{if \ }n\equiv3\mod 4
\end{cases}$$

In each case there is a $\Z/2$--summand generated by a tangential
homotopy equivalence with non--trivial normal invariant. The other summands
can be represented by homeomorphisms.
\end{prop}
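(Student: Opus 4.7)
The plan is to use the product structure $X=\s 1\times\rp n$ together with the Federer--Schultz spectral sequence to reduce the computation to that of the first two homotopy groups of $\Aut(\rp n)$. Since $\text{map}(X,X)\simeq\text{map}(X,\s 1)\times\text{map}(X,\rp n)$ and $[X,\s 1]=H^1(X;\Z)=\Z$, on the $\s 1$-side only the two maps of degree $\pm 1$ can contribute to a self-equivalence, producing a $\Z/2$-summand generated by the reflection $(t,x)\mapsto(\bar t,x)$. The entire remaining analysis concerns $\text{map}(X,\rp n)$.

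By adjunction $\text{map}(X,\rp n)\cong L\,\text{map}(\rp n,\rp n)$, the free loop space of the self-map space of $\rp n$. For each component $A$ of $\text{map}(\rp n,\rp n)$ one has $\pi_0(LA)$ equal to the set of conjugacy classes in $\pi_1(A)$. Restricting to components whose underlying self-map of $\rp n$ is a homotopy equivalence, the computation reduces to $\pi_0(\Aut(\rp n))$ and $\pi_1(\Aut(\rp n))$ (together with the self-conjugation action), which must then be reassembled with the $\s 1$-factor.

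To compute the homotopy groups of $\Aut(\rp n)$, I would apply the Federer--Schultz spectral sequence
$$E_2^{p,q}=H^p\bigl(\rp n;\pi_q(\rp n)\bigr)\Longrightarrow\pi_{q-p}\bigl(\text{map}_*(\rp n,\rp n)_{\rm id}\bigr)$$
with local coefficients determined by the $\pi_1$-action on $\pi_q(\rp n)=\pi_q(\s n)$ for $q\geqslant 2$, and then pass from based to free self-map space via the evaluation fibration $\text{map}_*(\rp n,\rp n)\to\text{map}(\rp n,\rp n)\to\rp n$. For $\pi_1$ the relevant columns are $H^{n-1}(\rp n;\pi_n(\s n))$ and $H^n(\rp n;\pi_{n+1}(\s n))$, whose structure depends both on the parity of $n$ (which controls whether the $\pi_n$-coefficients are twisted) and on $n\bmod 4$ (which governs $\pi_{n+1}(\s n)=\Z/2$ and an extension problem). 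This is precisely the point at which Yau's assertion in \cite{Y} must be corrected.

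Finally I would assemble the three cases of the statement: the $\Z/2$ from the $\s 1$-reflection, a further $\Z/2$ from $\pi_0(\Aut(\rp n))$ when $n$ is odd, and the $\pi_1(\Aut(\rp n))$-contribution modulo conjugation. The ``geometric'' summands are realized by explicit homeomorphisms (coordinate reflections, standard self-homeomorphisms of $\rp n$, and twists by a loop in $\Aut(\rp n)$), while one $\Z/2$-summand is realized by a \emph{tangential} homotopy equivalence obtained by twisting the identity by a class in $[X,G/\Top]$ with nontrivial normal invariant but vanishing surgery obstruction. The chief technical obstacle is the Federer--Schultz calculation itself: handling the differentials and solving the extension that produces a $\Z/4$ rather than $(\Z/2)^2$ when $n\equiv 1\bmod 4$, and correctly tracking the conjugation action when passing from based to free loops --- exactly the subtleties missed in the existing literature.
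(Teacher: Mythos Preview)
Your overall strategy matches the paper's: decompose $\pi_0(\Aut(\s1\times\rp n))\cong\Z/2\times\pi_0(\Aut(\rp n))\times\pi_1(\Aut(\rp n))$ and compute $\pi_1(\Aut(\rp n))$ via a Federer--Schultz spectral sequence. (The paper uses the double cover $\Aut_{\Z/2}(\s n)\to\Aut(\rp n)$ and the section-space spectral sequence for $\s n\times_{\Z/2}\s n\to\rp n$ rather than your based-map spectral sequence plus evaluation fibration, but these are equivalent packagings.) Two points in your outline are off, however.

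First, the $\Z/4$ does not come from the spectral-sequence extension. In total degree $1$ only $E^2_{-n+1,n}\cong E^2_{-n,n+1}\cong\Z/2$ survive, there are no differentials, and the resulting extension \emph{splits}: the image of a generator of $\pi_1(SO_{n+1})$ furnishes an order-two lift of the generator of $E^\infty_{-n+1,n}$. Thus $\pi_1$ of the equivariant (equivalently, based) model is $(\Z/2)^2$ for every $n\geqslant2$. The $\Z/4$ appears only for $n$ odd, in the further extension by $\Z/2$ coming from the double cover --- in your language, from $\pi_1(\rp n)$ at the base of the evaluation fibration --- and the paper settles it by an explicit geometric computation rather than extension-theoretic bookkeeping: writing $n=2m-1$ in complex coordinates, the loop $g_t(z)=e^{\pi i t}z$ hits the quotient $\Z/2$, and one checks $[g]^2=\rho^m$ with $\rho$ the $SO$-generator, giving order $4$ iff $m$ is odd. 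Second, ``twisting the identity by a class in $[X,G/\Top]$'' is not a construction of a self-equivalence: a normal invariant with trivial surgery obstruction only yields an element of $\stop(X)$, not a homotopy equivalence $X\to X$. The paper instead exhibits the exotic generator directly as the adjoint of
\[
\s1\times\rp n\longrightarrow(\s1\times\rp n)\vee\s{n+1}\xrightarrow{\ \mathrm{pr}\,\vee\,\eta\ }\rp n
\]
with $\eta$ generating $\pi_{n+1}(\rp n)$, verifies tangentiality from the stable triviality of $\eta^*\tau(\s n)$, and then checks separately that the associated normal invariant is nontrivial.
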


\medskip

\begin{proof} We begin by observing that there is a decomposition
$$\pi_0(\Aut(\s1\times\rp n))\cong \Z/2 \times\pi_0(\Aut(\rp n ))\times \pi_1(\Aut(\rp n ))$$
The first two factors represent the product maps. Here we first have
$$\pi_0(\Aut(\rp n )) = Z/2\ {\rm for\ }n\ {\rm odd\ and\ } 0\ {\rm for\ }n\
{\rm even.}$$
(The generator is given by the reflection
$r_0([x_0,....x_n])= [ -x_0,x_1,...,x_n]=
[x_0,-x_1,....,-x_n]$,
and if $n$ is even, this can be deformed linearly to the identity.)\par

$\pi_1(\Aut(\rp n ))$ can be computed using the methods in \cite{S}.
(The basepoint in $\Aut(\rp n)$ is the identity map.)
Let $\Aut_{\Z/2}(\s n)$ be the space of homotopy equivalences of $\s n$ 
which are equivariant with respect to the antipodal action
of $\Z/2$.  Taking quotients defines  a double covering projection
$\Aut_{\Z/2}(\s n)\to \Aut(\rp n )$, which is trivial if $n$ is even and
non--trivial (over each component) if $n$ is odd. To see this, observe that
the preimage of the identity map of $\rp n$ consists of the identity and the
antipodal maps of $\s n$.  These are not even homotopic for $n$ even, but 
equivariantly (in fact, linearly) isotopic for $n$ odd.  \par
Thus, for $n$ even there is an isomorphism
$$\pi_1(\Aut_{\Z/2}(\s n))\cong\pi_1( \Aut(\rp n ))\,,$$
whereas for $n$ odd there is an exact sequence
\begin{equation}\label{eq:oddcase}
0\to \pi_1(\Aut_{\Z/2}(\s n))\to\pi_1( \Aut(\rp n ))\to \Z/2\to 0\,.
\end{equation}

 Let $\Gamma_n$ be the space of
sections of the fibration $\rho\,:\,\s n\times_{\Z/2} \s n\to \rp n $
(diagonal action), induced by projection on the first factor. 
Then there is a homeomorphism $\Aut_{\Z/2}(\s n)\approx\Gamma_n$
defined by taking graphs and quotients by the $\Z/2$--actions.
The fibration $\rho$ has simple fibers ($\s n$), so we can apply the 
spectral sequence in \cite{S}.  This  is a
homology type, second quadrant spectral sequence with
$E_{p,q}^2=H^{-p}(\rp n , \pi_q(\s n))$, converging
to $\pi_{q+p}(\Gamma_n)$. \smallskip

The $E^2$--term has local coefficients coming from the monodromy in the
fibration $\rho$. When $q=n$ this is the same as the orientation system of 
$\rp n $ --- hence $E_{p,n}^2\cong H_{n+p}(\rp n ,\Z)$ with {\em trivial}
coefficients. In particular, $E^2_{-n+1,n}\cong\Z/2$ and  $E^2_{-n+2,n}\cong0$.
\par
When $q=n+1$ and $n\geqslant3$, the coefficients are automatically trivial,
since $\pi_{n+1}(\s n)\cong\Z/2$. Therefore  $E^2_{-n,n+1}\cong\Z/2$. 
If $n=2$, the monodromy is induced by the antipodal map on $\s 2$, but this
is trivial on $\pi_3(\s2)\cong\Z$. One way to see this is to use that two times
a generator of $\pi_3(\s2)$ is represented by the Whitehead product of
the identity with itself. Hence the map induced on $\pi_3(\s2)$ by a map of 
degree $d$ on $\s2$  is multiplication by  $d^2$.  It follows that also 
$E_{-2,3}^2=H^2(\rp 2,\Z)\cong\Z/2$.\par 
Thus, in total degree $p+q=1$ we have only two non--trivial groups,
$E^2_{-n,n+1}\cong E^2_{-n+1,n}\cong\Z/2$. There are no differentials 
involving these groups, hence 
also  $E^\infty_{-n,n+1}\cong E^\infty_{1-n,n}\cong Z/2$.
The resulting exact sequence
$$0\to H^n(\rp n ,\pi_{n+1}(\s n))\to \pi_1(\Gamma_n)\to
H^{n-1}(\rp n ,\pi_{n}(\s n))\to 0$$
splits, since the generator of $\pi_1(SO_{n+1})$ defines an element
of order two in $\pi_1(\Aut_{\Z/2}(\s n))$ which maps to the
generator of $H^{n-1}(\rp n ,\pi_{n}(\s n))\cong\Z/2$. (Using the standard 
cell structure of $\rp n$, the image is 
represented by the cellular cocycle mapping the unique  $(n-1)$--cell to 
a homotopy equivalence of $\s n$, by the construction in \cite[p.\,52]{S}.)
It follows that this
factor can be represented by a one--parameter family of homeomorphisms
--- hence the associated homotopy equivalence of $\s1\times\rp n$ can also
be represented by a homeomorphism.\par
The other factor, however, can not. It corresponds to (one--parameter
families of) maps that restrict to the inclusion on $\rp{n-1}$, and the
nontrivial element can be constructed as the composition
$$\s1\times \rp n\to\s1\times \rp n\vee\s{n+1}
\xrightarrow{\rm{pr}\vee\eta}\rp n\,,$$
where $\eta$ is the generator of $\pi_{n+1}(\rp n)$. It follows \eg by the
method of \cite[pp.\,31--32]{KS} that the associated homotopy equivalence
$h_\eta$ of $\s1\times\rp n$ has non--trivial normal invariant; we
only need to observe is that $h_\eta$ is a {\em tangential} homotopy 
equivalence. But this follows since $\eta^*\tau(\rp n)=\eta^*\tau(\s n)$ 
is stably trivial. \smallskip

It remains to examine the exact sequence (\ref{eq:oddcase}). Write $n=2m-1$,
and  think of $\rp n$ as a quotient of the unit sphere in $\mathbb C^m$.
Consider the family
$g_t(z_1,\dots,z_m)=(e^{\pi it} z_1,\dots,e^{\pi it} z_m)$, $t\in[0,1]$.
This is a path of equivariant homeomorphisms of $\s n$, but the path is closed
only in $\Aut(\rp n)$. Hence it maps to the nontrivial element in $\Z/2$ to
the right. \par
The image $\rho$ of the generator of $\pi_1(SO_{n+1})$ can be represented
by $(t,z)\mapsto e^{2\pi it}z,\ t\in[0,1]$ in any one of the coordinates
$z=z_i$. Hence
$ \rho^m=[g]^2$ in $\pi_1(\Aut(\rp n))$.  Since $\rho^2=1$, it follows that
if $m$ is odd, $[g]$ has order 4 and $\rho=[g]^2$, and if $m$ is even, it 
has order 2. Since $\pi_1(\Aut(\rp n))$ is abelian, the result follows.
 \end{proof}

\begin{rem}\label{rem:autpn} (i) Since homeomorphisms are tangential, it 
follows that {\em all}
homotopy self--equivalences of $\s1\times\rp n$ are tangential.\smallskip

(ii) The obvious map $SO(n+1)\to\Aut(\rp n)$ clearly factors through
$PSO(n+1)=SO(n+1)/\text{center}$. Then the main calculation in the above
proof  can be formulated as
$$\pi_1(\Aut(\rp n))\cong \pi_1(PSO(n+1))\times\Z/2\,,$$
--- the last factor being represented by $h_\eta$.\par
 
\end{rem}

To compute the action of $\pi_0(\Aut(\s1\times\rp n))$ on the structure set,
we use the results of \cite{R}.  A homotopy self--equivalence $h$ of a
manifold $M$ induces an automorphism $h_*:\stop(M)\to \stop(M)$ (by
functoriality).  Let $g:N\to M$ represent an element in $\stop(M)$. Then
Ranicki shows that the composition $hg$ represents the element
$[h]+h_*([g])\in\stop(M)$. \medskip

$h_*$ can be computed from the induced diagram of exact sequences
(\ref{eq:surgpn}), which we now know can be written in the following form:
$$ \xymatrix{
0\ar[r]&K_n \ar[r]\ar[d]^{h_*}&\stop(\s1\times\rp n)\ar[r]^\eta\ar[d]^{h_*}&
h_{n+1}(\s1\times\rp n,\underline{\mathcal L})\ar[d]^{h_*} \\
0\ar[r]&K_n \ar[r]& \stop(\s1\times\rp n) \ar[r]^\eta&
h_{n+1}(\s1\times\rp n,\underline{\mathcal L})
}$$

For the next result it will be convenient to introduce the homomorphism
$\mu:\pi_0(\Aut(\s1\times\rp n))\to \{\pm 1\}$ defined for $n$ odd as follows:
\par
When $n$ is odd, $H_n(\s1\times\rp n,\Z)\cong \Z$. Hence, if
$h\in \Aut(\s1\times\rp n)$, the homomorphism induced on $H_n$ by
$h$ is multiplication by an integer in $\{\pm1\}$, which we denote by
$\mu(h)$. $\mu$ is clearly a homomorphism.\par
Let $r_0:\rp n\to \rp n$ be an orientation reversing reflection and put
$r=I_{\s1}\times r_0$. Then $\mu(r)=-1$, and $-1\mapsto r$ defines a
splitting of $\mu$. Note that $\mu$ maps all the other generators of
$\pi_0(\Aut(\s1\times\rp n))$ identified in Prop. \ref{prop:autpn}
to 1.

\begin{lm} $h_*:h_{n+1}(\s1\times\rp n,\underline{\mathcal L})\to
h_{n+1}(\s1\times\rp n,\underline{\mathcal L})$ is the identity for every
homotopy equivalence $h$.  Hence
$h_*:\stop(\s1\times\rp n)\to\stop(\s1\times\rp n)$
is always the identity if $n\not\equiv3\mod4$.\smallskip

If  $n\equiv3\mod4$, then $h_*:K_n\to K_n$ is multiplication by
$\mu(h)$.\smallskip

 This determines $h_*:\stop(\s1\times\rp n)\to\stop(\s1\times\rp n)$
completely.
 \end{lm}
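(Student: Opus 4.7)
My plan is to verify both assertions on the explicit set of generators of $\pi_0(\Aut(\s1\times\rp n))$ produced in Proposition \ref{prop:autpn}, since $h_*$ depends only on the homotopy class $[h]$. A key preliminary observation is that the $\underline{\mathcal L}$-homology pushforward $h_*$ on $h_{n+1}(\s1\times\rp n,\underline{\mathcal L})$ corresponds, for $n$ odd (when $\s1\times\rp n$ is orientable), via Poincar\'e duality to the twisted pullback $(\deg h)\cdot h^*$ on cohomology, so the sign $\deg h=-1$ cancels the sign introduced by an orientation-reversal on the top-dimensional group. This is what makes ``identity on all of $h_{n+1}$'' consistent with the fact that the reflection $r=I_{\s1}\times r_0$ acts as $-1$ on $H^{n+1}(\s1\times\rp n,\Z)$.

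For the identity action on $h_{n+1}(\s1\times\rp n,\underline{\mathcal L})$, I would check each generator separately using the splitting
\[h_{n+1}(\s1\times\rp n,\underline{\mathcal L})\cong h_{n+1}(I\times\rp n,\underline{\mathcal L})\oplus h_n(\rp n,\underline{\mathcal L})\]
coming from the cofibration $\rp n\hookrightarrow\s1\times\rp n\to\Sigma\rp n_+$, which is compatible with the $\stop$-splitting of Lemma \ref{lm:surgpn}. The $\s1$-flip, the $PSO(n+1)$-generator, and the order-four generator (for $n\equiv1\bmod 4$) are represented by homeomorphisms which are either isotopic to the identity through slice-preserving families, or act trivially on $\underline{\mathcal L}$-homology by the naturality results of \cite{R}. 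The reflection $r_0$ acts trivially on every $\Z/2$-summand cell-by-cell, and on any torsion-free summand the sign from orientation reversal is absorbed by the degree in the PD-correction above. Finally $h_\eta$, built as ``identity off the top cell of $\s1\times\rp n$ plus an $\eta$-correction on the top cell'', is handled by a direct cofiber-sequence calculation comparing pushforwards on the $n$-skeleton and on the top cell.

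For the action on $K_n$, which is relevant only when $n\equiv 3\bmod 4$ (so $K_n\cong\Z$), recall that $K_n$ sits inside $\stop(\s1\times\rp n)$ as the image of the $\Z$-summand in $L_{n+2}(\Z\oplus\Z/2,+)\cong L_{n+1}(\Z/2,+)\oplus L_{n+2}(\Z/2,+)$. The action of $h$ on this $L$-group is determined by the induced automorphism of $\pi_1$ together with the orientation character. All generators of $\pi_0(\Aut(\s1\times\rp n))$ in Proposition \ref{prop:autpn} except $r$ have $\mu=+1$ and preserve orientation, so they act trivially on the signature-type generator of $L_{n+1}(\Z/2,+)$ and hence on $K_n$. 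The exceptional generator $r$ reverses orientation, and orientation-reversal negates the signature, giving $r_*|_{K_n}=-1=\mu(r)$.

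The final clause then follows from the exact sequence $0\to K_n\to\stop(\s1\times\rp n)\xrightarrow{\eta}h_{n+1}(\s1\times\rp n,\underline{\mathcal L})$: the identity action on $h_{n+1}$ forces $h_*$ to fix $\stop$ modulo $K_n$, and $h_*|_{K_n}=\mu(h)$ pins down the rest. The hardest step will be the $h_\eta$ computation, where one must carefully distinguish between the fact that \emph{$h_\eta$ has a non-trivial normal invariant as an element of $\stop$} (which was crucial in Proposition \ref{prop:autpn}) and the claim that \emph{$h_\eta$ acts as the identity on the ambient group of normal invariants} (which is what is needed here). Disentangling these without circularity requires tracing $h_\eta$ back to its origin as a loop in $\Aut(\rp n)$ coming from $\pi_{n+1}(\rp n)$, and then carefully applying naturality of $\underline{\mathcal L}$-homology pushforward.
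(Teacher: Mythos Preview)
Your plan differs from the paper's proof in both halves, and the second half has a real gap.

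\medskip

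\textbf{On the first assertion (identity on $h_{n+1}$).} The paper does not go generator by generator. Instead it gives one uniform argument: every $h\in\Aut(\s1\times\rp n)$ is tangential (Remark~\ref{rem:autpn}(i)), tangential homotopy equivalences preserve the $L$--theory fundamental class \cite[Prop.\,16.16]{R2}, hence $h_*=(h^*)^{-1}$ under the Poincar\'e identification $h_{n+1}(\s1\times\rp n,\underline{\mathcal L})=[\s1\times\rp n,G/Top]$; and then one observes that $h^*$ is the identity on $[\s1\times\rp n,G/Top]$, which is an elementary cohomology check. Your degree--twist observation and your case analysis for $h_\eta$, $r$, etc.\ are all subsumed by this single step. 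In particular, there is no need for a separate cofiber--sequence computation for $h_\eta$: the tangentiality of $h_\eta$ is already established in Proposition~\ref{prop:autpn}, and that is all the paper uses.

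\medskip

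\textbf{On the action on $K_n$ for $n\equiv3\bmod4$.} Here your argument has a genuine gap. You write that the action of $h$ on $L_{n+2}(\Z\oplus\Z/2)$ is ``determined by the induced automorphism of $\pi_1$ together with the orientation character,'' and then conclude that $r$ acts by $-1$ because it reverses orientation and ``orientation--reversal negates the signature.'' But $r=I_{\s1}\times r_0$ induces the \emph{identity} on $\pi_1$, and the orientation character $\omega$ is trivial here, so the functorial map $r_*$ on the abstract $L$--group $L_{n+2}(\Z\oplus\Z/2,+)$ is the identity. The slogan ``orientation--reversal negates signature'' applies to signatures of manifolds, not to the covariant functoriality of $L_*(\Z[\pi_1])$ in $\pi_1$; invoking it here conflates two different things and does not produce the sign you need. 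The $-1$ on $K_n\subset\stop$ genuinely does not come from the $L$--group alone: it comes from how $r$ interacts with the realization map $L_{n+2}\to\stop$, which involves the geometry of $\s1\times\rp n$, not just $\pi_1$.

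The paper handles this by an entirely different, geometric route: it factors $K_n\to\stop(\s1\times\rp n)$ through $\stop(\rp n)$, uses Cappell's splitting map $\rho:\stop(\s1\times\rp n)\to\stop(\rp n)$ together with the Browder--Livesay invariant $BL:\stop(\rp n)\to K_n$, observes that every $h$ can be homotoped to preserve a slice $\{t\}\times\rp n$ where it restricts to either $\mathrm{id}$ or $r_0$ (according to $\mu(h)$), and then invokes the known computation from \cite{BDK} that $(r_0)_*$ negates the Browder--Livesay invariant. You should either supply an honest argument for why the realization map picks up a sign under $r$, or follow the paper's Browder--Livesay route.
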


\begin{proof}
It is easy to see that
$$h^*:[\s1\times\rp n,G/Top\,]\to[\s1\times\rp n,G/Top\,]$$
is the identity homomoprhism for every $h$.  For the first part of the Lemma,
it then suffices to observe that via the identification
$$ h_{n+1}(\s1\times\rp n,\underline{\mathcal L})=[\s1\times\rp n,G/Top\,]$$
we have $h_*=(h^*)^{-1}$.  For a general homotopy equivalence
$h:M\to N$, this is true if $h_*$ preserves $L$--theory fundamental
classes, \ie $h_*([M]_L)=[N]_L$.  (See \cite[Lemma 2.5]{R}.) This is not
always the case, but it follows from the characterization of fundamental
classes in \cite[Prop. 16.16]{R2} that it holds for {\em tangential} homotopy
equivalences --- hence for all $h\in \Aut(\s1\times\rp n)$, by  remark
\ref{rem:autpn}.
 \smallskip

 When  $n\equiv3\mod4$, $K_n\cong\Z$.  Hence, since $\im\eta$ is
torsion, $h_*$ is determined by the two components $h_*:K_n\to K_n$ and
$h_*:h_{n+1}(\s1\times\rp n,\underline{\mathcal L})\to
h_{n+1}(\s1\times\rp n,\underline{\mathcal L})$.   $K_n$ comes from the
surgery sequence of $\rp n$, and the homomorphism
$K_n\to \stop(\s1\times\rp n)$ factors as
$$K_n\to \stop(\rp n)\xrightarrow{\s1\times(-)} \stop(\s1\times\rp n)\,.$$

Both of these maps are split injections, and if 
$h:\s1\times\rp n\to\s1\times\rp n$ is a homotopy equivalence, 
$h_*:K_n\to K_n$ is can be described as the composition in the following 
diagram:

$$ \xymatrix{
K_n \ar[r]\ar[d]^{h_*}&\stop(\rp n)\ar@{-->}[d]\ar[r]&
\stop(\s1\times\rp n)\ar[d]^{h_*} \\
K_n & \stop(\rp n)\ar[l]_{BL\ \ \ \ }&\stop(\s1\times\rp n)\ar[l]_{\rho\ \ }
}$$
(The dashed arrow is defined by the diagram.)
The {\em splitting map} $\rho$  maps a class $[f]$  to 
$f^{-1}(t\times\rp n)\xrightarrow{f}\rp n$ for a
suitable representative $f:N\to \s1\times\rp n$ and $t\in\s1$, and $BL$ is
the Browder--Livesay invariant.  From the proof of Prop. \ref{prop:autpn} we
see that every $h$ can be chosen such that $h$ preserves some $t\times\rp n$
and is either the identity or the reflection $r_0$ there, and this is
determined by $\mu(h)$.  Then the vertical map $\stop(\rp n)\to \stop(\rp n)$
is either the identity or ${r_0}_*$, and the result follows \eg by the
discussion preceeding Theorem 4 in \cite{BDK}.
\end{proof}

It follows that for  $n\not\equiv3 \mod4$, the action of the homotopy
equivalence group on $\stop(\s1\times\rp n)$ reduces to translation by the
structure represented by $h_\eta$.
For  $n\equiv3 \mod4$, we also have the homeomorphism $r$ acting
by multiplication by $-1$ (only non--trivial on $K_n\cong\Z$). The set of
homeomorphism classes of manifolds homotopy equivalent to $\s1\times\rp n$
is in one--one correspondence with the elements of the quotient of
 $\stop(\s1\times\rp n)$ under this action, \ie with
\begin{equation}\label{eqn:numbers}
\begin{cases}
(\Z/2)^{[\frac n2]+[\frac{n-1}4]-1} \text{ for }n\not\equiv3 \mod4\\
\mathbb N\times (\Z/2)^{[\frac n2]+[\frac{n-1}4]-1} \text{ for }
n\equiv3 \mod4
\end{cases}
\end{equation}

($\mathbb N$ is the set of natural numbers $\{0,1,2,\dots\}$.) It may be
simpler to think of the exponent $\ell={[\frac n2]+[\frac{n-1}4]-1}$ as 
follows: 
\[\rm{If\ } n=\begin{cases} 4m\\ 4m+1\\ 4m+2\\ 4m+3\end{cases} 
\quad \rm{then\ }\ell=\begin{cases}
3m-2\\ 3m-1\\ 3m\\  3m
\end{cases}\]
\smallskip

\section{Topological classification up to conjugacy}

In the previous section we classified all possible quotients of free
involutions on $\s1\times\s n$ up to homeomorphism.  The following theorem
says that this is the same as the classification of free involutions up to
conjugation.

\begin{thm}
Let $n\geqslant3$. Then two free involutions on $\s1\times\s n $ are conjugate
if and only if the two quotients are homeomorphic.
\end{thm}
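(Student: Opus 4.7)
One direction is immediate: a conjugating homeomorphism of $\s1\times\s n$ descends to a homeomorphism of the quotients. For the converse, I would use covering space theory. Given a homeomorphism $h\colon Q_1\to Q_2$, any lift $\tilde h\colon \s1\times\s n\to\s1\times\s n$ through the covering projections $p_i$ satisfies $\tilde h\tau_1=\tau_2\tilde h$, giving the desired conjugation. Such a lift exists precisely when $h_*$ carries the distinguished index-two subgroup $H_i:=(p_i)_*\pi_1(\s1\times\s n)\subset\pi_1(Q_i)$ onto its counterpart, so the task reduces to arranging this subgroup-matching condition---possibly after replacing $h$ by $\beta\circ h$ for a suitable self-homeomorphism $\beta$ of $Q_2$.

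I would run the argument case by case according to Lemma~\ref{lm:pi1}. When $\pi_1(Q)\cong\Z$, the unique index-two subgroup forces $h_*(H_1)=H_2$ automatically. When $\pi_1(Q)\cong D_\infty$, only one of its three index-two subgroups is infinite cyclic (namely $\langle ab\rangle$), so $h_*(H_1)=H_2$ is again automatic. When $\pi_1(Q)\cong\Z\oplus\Z/2$ there are two $\Z$-subgroups of index two---the standard $\Z\oplus0$ and the diagonal $\langle(1,1)\rangle$---which $h_*$ could in principle exchange. For $n$ even I would dispose of this by orientation: the diagonal subgroup yields the mapping torus of $-\mathrm{id}_{\s n}$, which for $n$ even is non-orientable and hence not $\s1\times\s n$, so only the standard subgroup can equal $H_i$.

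The genuine difficulty is the subcase $\pi_1(Q)\cong\Z\oplus\Z/2$ with $n$ odd, in which both subgroups do produce covers homeomorphic to $\s1\times\s n$. My plan here is to invoke Proposition~\ref{prop:autpn}: the generator of $\pi_1(PSO(n+1))$ contributes a class $\psi\in\pi_0(\Aut(\s1\times\rp n))$ which is represented by a self-homeomorphism of the model and which, via an explicit evaluation-map computation using the standard rotation path from $I$ to $-I$, induces the automorphism of $\Z\oplus\Z/2$ that exchanges the two $\Z$-subgroups. Because $\psi$ acts trivially on $\stop(\s1\times\rp n)$---by the lemma in Section~\ref{section:surgery} that $h_*$ is the identity on the normal-invariant set for every tangential homotopy equivalence $h$---its $\pi_0$-class is realized by an actual self-homeomorphism on every $Q$ in the homotopy type of $\s1\times\rp n$, not just the model. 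Precomposing $h$ with this self-homeomorphism whenever $h_*$ swaps the subgroups arranges $h_*(H_1)=H_2$, and $\tilde h$ then conjugates $\tau_1$ to $\tau_2$ as before. The main technical point I expect to pin down carefully is verifying that the $\pi_1(PSO(n+1))$-generator really does act by subgroup exchange rather than trivially; this reduces to showing that a path from $I$ to $-I$ in $SO(n+1)$, evaluated at a basepoint of $\s n$, projects to a non-nullhomotopic loop in $\rp n$, which I would establish by an explicit computation with the rotation $e^{-i\pi t}I$ on $\s n\subset\mathbb{C}^{(n+1)/2}$.
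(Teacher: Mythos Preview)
Your proposal is correct and follows essentially the same approach as the paper's own proof: the same covering-space lifting argument, the same trichotomy on $\pi_1(Q)$, and in the delicate case $\pi_1(Q)\cong\Z\oplus\Z/2$ with $n$ odd the same use of the $\pi_1(PSO(n+1))$ generator (the paper's explicit map is $g_0(t,[z_1,\dots,z_m])=(t,[\sqrt t\,z_1,\dots,\sqrt t\,z_m])$) together with the structure-set computation to transport this subgroup-swapping homeomorphism from the model $\s1\times\rp n$ to an arbitrary $Q$ in that homotopy type. One small point to tighten: when you invoke the lemma to conclude that $\psi_*$ is the identity on $\stop(\s1\times\rp n)$, the clause ``$h_*$ is the identity on the normal-invariant set'' is not by itself enough when $n\equiv 3\pmod 4$, since there is also the $K_n\cong\Z$ summand on which $h_*$ is multiplication by $\mu(h)$; you need the additional observation (made just before that lemma in the paper) that $\mu(\psi)=1$, which is clear since $\psi$ arises from rotations.
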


(This is also true for $n<3$, by the classification theorems in these cases
\cite{A}\cite{T}. Only the last part of the following proof requires
$n\geqslant3$.)

\begin{proof} One way is trivial: topologically conjugate involutions clearly
have homeomorphic quotients. \par
Let $\tau_1$ and $\tau_2$ be two free involutions, and assume
that $f:Q_{\tau_1}\to Q_{\tau_2}$ is a homeomorphism. Let
$p_i:\s1\times\s n\to Q_{\tau_i},\ i=1,2$ be the two projections. Choose a
basepoint $x_0\in\s1\times\s n$ and let $q_1=p_1(x_0)$, $q_2=f(q_1)$ and
$x_1$ one of the two points in $ p_2^{-1}(q_2)$. Then there is a (unique)
lifting $F:\s1\times \s n\to\s1\times \s n$ such that $F(x_0)=x_1$, if and
only if
\begin{equation}\label{eq:lift}
f_*{p_1}_*(\pi_1(\s1\times \s n,x_0))\subseteq
{p_2}_*(\pi_1(\s1\times\s n,x_1))\,.
\end{equation}
Assume  $F$  is such a lifting. Then obviously $\tau_2\circ F$ and
$F\circ\tau_1$ are also liftings of $f$, both distinct from $F$.
But since there are only two possible liftings of $f$, we must have
$\tau_2\circ F=F\circ\tau_1$. Hence $\tau_1$ and $\tau_2$ are conjugate.
\smallskip

Thus we are reduced to showing that we always can arrange for (\ref{eq:lift})
to hold.\par
In fact, when $\pi_1(Q_\tau)\cong \Z$ or $D_\infty$  this is immediately
true, since in these groups there is a {\em unique} infinite cyclic subgroup of
index two. \par
In the remaining case,  $\pi_1(Q_\tau)\cong \Z\oplus\Z/2$, there are two
infinite cyclic subgroups of index 2, generated by (1,0) and (1,1).
When $n$ is {\em even}, they correspond to two distinct
double covers --- $\s1\times\s n$ and $\s1\tilde\times\s n$ (mapping torus
of the antipodal map on $\s n$). Note that these are uniquely determined up
to homeomorphism by 3.3.  But any homeomorphism $f$ will lift to a
homeomorphism from $\s1\times\s n$ to {\em some} double covering of
$Q_{\tau_2}$, hence necessarily to $\s1\times \s n$.\par

When $n$ is odd, (\ref{eq:lift}) may not be a priori satisfied, but we claim
that if not, there is a homeomorphism $g$ of $Q_{\tau_2}$ such that it is
satisfied by $g f$ i.\,e. such that $g_*$ maps the two infinite cyclic
index two subgroups of $\pi_1(Q_\tau)$ to each other. \par

Let $n=2m-1$ and think of $\rp {2m-1}$ as given by complex
homogeneous coordinates $[z_1,\dots,z_m]$, and define
$g_0:\s1\times\rp {2m-1}\to\s1\times\rp {2m-1}$ by
$$g_0(t,[z_1,\dots,z_m])=(t,[\sqrt t\, z_1,\dots,\sqrt t\, z_m])\,.$$

Then ${g_0}_*$ maps the two index 2 cyclic subgroups of
$\pi_1(\s1\times\rp n)$ onto each other. Now choose a homotopy
equivalence $h:Q_{\tau_2}\to\s1\times\rp n$. $h$ determines an
element of the structure set $\mathcal S^{TOP}(\s1\times\rp n)$,
and it follows from the calculations in section 3.4 that
post--composition with $g_0$ is the identity on this set.  Thus
there exists a homeomorphism $g$ of $Q_{\tau_2}$ such that
$g_0\circ h\simeq h\circ g$. The composed homeomomorphism $gf$
now satisfies (\ref{eq:lift}).
\end{proof}
\begin{rem} Call two involutions $\tau_1$ and $\tau_2$ {\em homotopically
conjugate} if there is a homotopy equivalence 
$f:\s1\times\s n\to \s1\times\s n$ such that $f\tau_1=\tau_2 f$.
The same proof (but simpler, since the existence of a {\em homotopy 
equivalence}  $g$ is obvious in this case) shows that for $n\geqslant2$
the quotients are homotopy equivalent if and only if the involutions are 
homotopically conjugate. Hence the homotopy classification in section 2 
says that there are exactly four involutions up to homotopical
conjugacy.
\end{rem}
\smallskip

\section{Homeomorphisms of $\rp n$ up to concordance}\label{section:conc}

This section is really a digression, compared with the main objective of this
paper.  However, we believe that it is worthwile pointing out that some of our
results can be used to give a simple 
computation of the group of concordance classes of homeomorphisms of $\rp n$.
A PL version of this result (for $n\geqslant4$) is given in \cite{L}, where 
it is proved by a 
related, but more complicated argument.  Here it will be a straightforward 
consequence of some of the results of section \ref{section:surgery}.3.
\smallskip

Let $\widetilde{\Top}(RP^n)$ and  $\widetilde{\Aut}(RP^n)$ be the simplicial 
sets of block homeomorphisms and block homotopy equivalences
of $RP^n$, respectively. Recall that 
$\widetilde{\Aut}(RP^n)\simeq {\Aut}(RP^n)$.  The group of concordance 
classes of homeomorphisms of $\rp n$ is $\pi_0(\widetilde{\Top}(RP^n))$,
 which sits in the exact sequence 
\begin{multline}\label{eq:auttop}\cdots\rightarrow
\pi_1(\widetilde{\Aut}(RP^n))\xrightarrow{\alpha}
\pi_1(\widetilde{\Aut}(RP^n)/\widetilde{\Top}(RP^n))\rightarrow\\
\rightarrow\pi_0(\widetilde{\Top}(RP^n))\xrightarrow{\beta}
\pi_0(\widetilde{\Aut}(RP^n))\,.
\end{multline} 

$\pi_0(\widetilde{\Aut}(RP^n))\cong\pi_0({\Aut}(RP^n))$ is $\Z/2$,
generated by a reflection in an $\rp {n-1}$, for $n$ odd, and trivial for 
$n$ even . Hence $\beta$
is split surjective, so to compute $\pi_0(\widetilde{\Top}(RP^n))$ it 
suffices to compute $\text{coker}\,\alpha$.

A well--known application of the topological $s$--cobordism theorem yields
an isomorphism $\pi_1(\widetilde{\Aut}(RP^n)/\widetilde{\Top}(RP^n ))\approx
\stop(I\times\rp n)$ for $n\geqslant 4$. In fact, this is also true for 
$n=3$:  formula (\ref{eq:pnxi}) gives $\stop(I\times\rp3)\cong\Z/2$, and the
nontrivial element is represented by the homotopy self--equivalence $h_\eta$,
which also represents an element in 
$\pi_1(\widetilde{\Aut}(RP^3)/\widetilde{\Top}(RP^3 ))$. It follows that
the homomorphism $\alpha$ is equivalent to a homomorphism 
\[\alpha':\pi_1(\Aut(\rp n))\rightarrow\stop(I\times\rp n)\,,\]
and it is easy to see check that  $\alpha'$ maps an element 
of $\pi_1(\Aut(\rp n))$ to the adjoint  homotopy equivalence (rel boundary)
of $I\times\rp n$, considered as an element of  $\stop(I\times\rp n)$.
But this map has been computed in section \ref{section:surgery}.3. Only
the generator $h_\eta$ maps nontrivially, so it follows from formula
(\ref{eq:pnxi}) that
\[\ker\beta\cong\text{coker\,}\alpha\cong\text{ coker\,}\alpha'
\cong (\Z/2)^{[\frac{n-2}4]}\]

Hence we have

\begin{thm} The group of concordance classes of homeomorphisms of $\rp n$,
$n\geqslant 3$, is
\[
\pi_0(\widetilde{\Top}(\rp n))\cong
\begin{cases} (\Z/2)^{[\frac{n-2}4]}\quad when\  n\ \ is\ even\\
 (\Z/2)^{[\frac{n+2}4]}\quad when\ n\ \ is\ odd
\end{cases}
\]
\end{thm}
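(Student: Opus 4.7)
The plan is to read off $\pi_0(\widetilde{\Top}(\rp n))$ from the exact sequence
\[
\pi_1(\widetilde{\Aut}(\rp n)) \xrightarrow{\alpha}
\pi_1(\widetilde{\Aut}(\rp n)/\widetilde{\Top}(\rp n)) \to
\pi_0(\widetilde{\Top}(\rp n)) \xrightarrow{\beta}
\pi_0(\widetilde{\Aut}(\rp n)),
\]
given just before the theorem statement, together with the splitting of $\beta$ and the computation of $\pi_0$ and $\pi_1$ of $\Aut(\rp n)$ from Proposition \ref{prop:autpn}. Concretely, I first observe that for $n$ odd the reflection $r_0$ is a homeomorphism representing the nontrivial element of $\pi_0(\widetilde{\Aut}(\rp n))\cong\Z/2$, while for $n$ even $\pi_0(\widetilde{\Aut}(\rp n))=0$; hence in both cases $\beta$ is split surjective and the sequence yields a split extension
\[
0 \to \text{coker}\,\alpha \to \pi_0(\widetilde{\Top}(\rp n)) \to \pi_0(\widetilde{\Aut}(\rp n)) \to 0,
\]
so the whole problem reduces to computing $\text{coker}\,\alpha$.

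Next I identify the middle group. For $n\geqslant 4$ the topological $s$--cobordism theorem gives the standard isomorphism $\pi_1(\widetilde{\Aut}(\rp n)/\widetilde{\Top}(\rp n))\cong \stop(I\times\rp n)$, and for $n=3$ the same identification is forced because (\ref{eq:pnxi}) produces $\stop(I\times\rp3)\cong\Z/2$, whose nontrivial element is represented by the tangential homotopy self--equivalence $h_\eta$ already sitting inside $\pi_1(\widetilde{\Aut}(\rp3)/\widetilde{\Top}(\rp3))$. Under this identification $\alpha$ is equivalent to the map $\alpha':\pi_1(\Aut(\rp n))\to \stop(I\times\rp n)$ sending a loop in $\Aut(\rp n)$ to its adjoint (rel boundary) homotopy equivalence of $I\times\rp n$.

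Now I compute $\alpha'$ using the concrete generators produced in the proof of Proposition \ref{prop:autpn}. By Remark \ref{rem:autpn}(ii), $\pi_1(\Aut(\rp n))$ splits (up to a $\Z/4$ when $n\equiv1\mod 4$) as $\pi_1(PSO(n+1))\times\Z/2$, where the $PSO(n+1)$--factor is represented by genuine one--parameter families of homeomorphisms of $\rp n$ and so has trivial image in $\stop(I\times\rp n)$, while the other $\Z/2$ is generated by $h_\eta$. As recorded in section \ref{section:surgery}.3, $h_\eta$ has nontrivial normal invariant and generates a $\Z/2$--summand of $\stop(I\times\rp n)$. Hence $\alpha'$ has image exactly this $\Z/2$, and using (\ref{eq:pnxi}),
\[
\text{coker}\,\alpha \cong \text{coker}\,\alpha' \cong (\Z/2)^{[\frac{n+2}{4}]-1} = (\Z/2)^{[\frac{n-2}{4}]}.
\]

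Combining with the split surjection $\beta$ then gives the stated answer: for $n$ even $\pi_0(\widetilde{\Top}(\rp n))\cong (\Z/2)^{[\frac{n-2}{4}]}$, and for $n$ odd it acquires an extra $\Z/2$--factor, which in that range is exactly $(\Z/2)^{[\frac{n+2}{4}]}$. I expect the main technical point to be the identification of $\alpha$ with $\alpha'$ (and verifying it in the dimension $n=3$ case, where one must use the alternative definition of $\stop(\rp 3)$ and check that $h_\eta$ really survives as a nontrivial element on both sides); once that is in hand, everything else is bookkeeping on top of the surgery calculations of section \ref{section:surgery}.3.
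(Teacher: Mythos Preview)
Your proposal is correct and follows essentially the same argument as the paper: the same exact sequence, the same split surjectivity of $\beta$, the same identification of the middle term with $\stop(I\times\rp n)$ via the $s$--cobordism theorem (with the same special handling of $n=3$ through $h_\eta$), and the same computation of $\text{coker}\,\alpha'$ from the fact that only $h_\eta$ has nontrivial image. The only cosmetic difference is that you invoke Remark~\ref{rem:autpn}(ii) explicitly to separate the $PSO(n+1)$--part (which maps trivially, being represented by homeomorphisms) from the $h_\eta$--factor, whereas the paper just cites the computations of section~\ref{section:surgery}.3 directly.
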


\begin{rem} The formula in the theorem also holds for $n=2$, since every 
self--homeomorphism of $\rp2$ is in fact {\em isotopic} to the identity.
For $n=1$ there is no element $h_\eta$, so the homomorphism $\alpha$
is trivial, and we have $\pi_0(\widetilde{\Top}(\rp 1))\approx\Z/2$, 
generated by a reflection. 
Thus we have computed $\pi_0(\widetilde{\Top}(\rp n))$ for all $n$.
\end{rem}

Note that the formula for $\pi_0(\widetilde{\Top}(\rp n))$ is the same 
as for $\pi_0(\widetilde{PL}(\rp n))$ in \cite{L}. ($n\geqslant4$.)  
In fact, we claim that the forgetful map 
\[\pi_0(\widetilde{PL}(\rp n))\to\pi_0(\widetilde{\Top}(\rp n))\] is
an isomorphism. To see this, observe that the $PL$--case can be treated 
just like that $Top$--case above, and then the claim amounts to proving
that 
\[ [\Sigma(\rp n_+), G/PL]\to [\Sigma(\rp n_+), G/Top]\]
is an isomorphism.  But this is an easy calculation. Hence we recover 
Liang's result.\smallskip

We end with the observation that from this point of view, the homeomorphism 
classes of manifolds coming from the two factors in the splitting 
$$\stop(\s1\times\rp n)\approx \stop(\rp n)\times \stop(I\times\rp n)$$
(Lemma \ref{lm:surgpn}) can be characterized by:
\smallskip

$\bullet$ The manifolds coming from $\stop(RP^n)$ are mapping
tori of the identity map, but one needs all fake projective
spaces.\par

$\bullet$ The manifolds coming from
$\stop(I\times\rp n)$ are mapping tori of the standard $\rp n$,
but one needs all concordance classes of homeomorphisms of
$\rp n$.
\smallskip

\section{Appendix. Homology with local coefficients.}

In this appendix we supply the local homology calculations necessary to
complete the proof of Lemma 2.4 for $n$ even. \par

On $\RPRP$ we let $\l$ be the local coefficient system of
groups isomorphic to $\Z$ but nontrivial over each wedge summand $\rp\infty$.
When considering spaces {\em over} $\RPRP$ we also generically let $\l$
denote the induced coefficient system. Note that $\l\otimes\l=\Z$ ---
the constant coefficient system.

\begin{ex}
$\rp n$ is a space over $\RPRP$ in two natural ways, but $\l$ is the
same in both cases. For $n$ {even} this is the orientation system on $\rp n$.
 Hence twisted Poincar\'e duality gives e.\,g.:
\begin{equation*}
 H_*(\rp{2m},\l)\cong H^{2m-*}(\rp{2m},\Z) \cong\left\{
\begin{array}{ll}
\Z & {\rm if\ }*=2m\\
\Z/2 & {\rm if\  *\  is\  even },\,0\leq * < 2m\\
0 &    {\rm otherwise}
\end{array} \right.
\end{equation*}
Just as in ordinary homology, a  $k$--connected map   $X\to Y$ over $\RPRP$
will induce isomorphisms in homology and cohomology with coefficients in $\l$
in degrees less than $k$, and if $X$ is the $k$--skeleton of $Y$, the
map on $H_k$ is onto. Hence it follows that
$$
H_*(\rp\infty,\l)\cong\left\{
\begin{array}{ll}
\Z/2 & {\rm if\  *\  is\  even}\,\geqslant 0\\
 0 &    {\rm otherwise}
\end{array}
\right.
$$
and $H_{2m}(\rp n,\l)\to H_{2m}(\rp\infty,\l)$ is surjective.
These results can, of course, also be obtained using the standard
$\Z/2$--equivariant cell structure on $\s{2m}\subset\s\infty$.

\end{ex}

A similar trick can be used to compute homology and cohomology of $\RPRP$
with coefficients in  $\l$.
Now we use that the natural map $\rpsum{2m}\to\RPRP$ is $(2m-1)$--connected
and that $\l$ is the orientation system for $\rpsum{2m}$.
Then, for $k<2m-1$ we have
$$H_k(\RPRP,\l)\cong H_k(\rpsum{2m},\l)\cong H^{2m-k}(\rpsum{2m},\Z)\,.$$

 Hence
\begin{equation*}
H_k(\RPRP,\l)\cong \left\{
\begin{array}{ll}
\Z/2 & {\rm for\ }k=0\\
\Z & {\rm for\ }k=1\\
\Z/2\oplus\Z/2 & {\rm for\ }k\ {\rm even\ }>0 \\
0 & {\rm otherwise}
\end{array}\right.
\end{equation*}

Moreover, $H_{2m}(\rp{2m},\l)\to H_{2m}(\RPRP,\l)$ is surjective.
Dually, the cohomology is
\begin{equation*}
H^k(\RPRP,\l)\cong \left\{
\begin{array}{ll}
\Z\oplus \Z/2 & {\rm for\ }k=1\\
\Z/2\oplus\Z/2 & {\rm for\ }k\ {\rm odd\ }>1 \\
0 & {\rm otherwise}
\end{array}\right.
\end{equation*}

In general, the relation between homology and cohomology is given by the
following

\begin{prop}[Universal coefficient Theorem]
For each $n$ there is a functorial, split exact sequence
$$
0\to Ext(H_{n-1}(X,\l),\Z)\to H^n(X,\l)\to Hom(H_n(X,\l),\Z)\to 0
$$
\end{prop}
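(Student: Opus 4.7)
The plan is to reduce this twisted universal coefficient theorem to the classical one by producing a single chain complex of free abelian groups that simultaneously computes $H_*(X,\l)$ and $H^*(X,\l)$.

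First I would let $\pi=\pi_1(X)$ act on the universal cover $\tilde X$, write $C_*=C_*(\tilde X)$ for its singular chain complex as a complex of free left $\Z[\pi]$-modules, and recall the chain-level definitions
$$H_n(X,\l)=H_n(C_*\otimes_{\Z[\pi]}\l),\qquad H^n(X,\l)=H^n(\mathrm{Hom}_{\Z[\pi]}(C_*,\l)).$$
Set $D_*=C_*\otimes_{\Z[\pi]}\l$. Because $C_*$ is $\Z[\pi]$-free and $\l$ is free of rank one as an abelian group, $D_*$ is a chain complex of free abelian groups, and $H_n(D_*)=H_n(X,\l)$.

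The key observation is that $\mathrm{Hom}_\Z(\l,\Z)\cong\l$ as $\Z[\pi]$-modules. Indeed, $\pi$ acts on $\l\cong\Z$ through a character $\omega$ with values in $\{\pm1\}$ (equivalently, as noted in the preamble, $\l\otimes_\Z\l\cong\Z$), and the induced action on $\mathrm{Hom}_\Z(\l,\Z)$ is by the same character. Combined with the tensor--Hom adjunction this gives an isomorphism of cochain complexes
$$\mathrm{Hom}_{\Z[\pi]}(C_*,\l)\cong\mathrm{Hom}_{\Z[\pi]}(C_*,\mathrm{Hom}_\Z(\l,\Z))\cong\mathrm{Hom}_\Z(C_*\otimes_{\Z[\pi]}\l,\Z)=\mathrm{Hom}_\Z(D_*,\Z),$$
so $H^n(X,\l)=H^n(\mathrm{Hom}_\Z(D_*,\Z))$.

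Finally, applying the classical (untwisted) universal coefficient theorem to the complex $D_*$ of free abelian groups yields precisely the desired split short exact sequence after the substitution $H_*(D_*)=H_*(X,\l)$. Functoriality in maps of spaces over $\RPRP$ is automatic from the naturality of every step, and the splitting (not canonical) is inherited from the classical statement. There is essentially no obstacle beyond the one-line verification $\mathrm{Hom}_\Z(\l,\Z)\cong\l$, which is the whole reason the twisted statement takes the same clean form as the untwisted one.
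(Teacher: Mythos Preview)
Your proof is correct and follows essentially the same approach as the paper's: both reduce to the classical universal coefficient theorem for the free abelian chain complex $D_*=C_*(\tilde X)\otimes_{\Z[\pi]}\l$, after identifying $\mathrm{Hom}_{\Z[\pi]}(C_*,\l)\cong\mathrm{Hom}_\Z(D_*,\Z)$ via the observation $\l\cong\l^*$ (equivalently $\l\otimes\l\cong\Z$). Your version is slightly more explicit in noting that $D_*$ consists of free abelian groups, which is what justifies applying the classical UCT.
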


\begin{proof}
Let $\pi=\pi_1(X)$ and set $CL_*=C_*(\tilde X)\otimes_{\Z\pi}\l$, where
$C_*(\tilde X)$ is the singular complex of the universal covering of $X$.
Then, by definition, $H_n(X,\l)=H_n(CL_*)$, and there is a universal
coefficient sequence
$$
0\to Ext( H_{n-1}(CL_*),\Z)\to H^n(Hom(CL_*,\Z))\to Hom(H_n(CL_*),\Z)\to 0
$$
($Hom$ means $Hom_\Z$.) $H^n(X,\l)=H_{-n}(Hom_{\Z\pi}(C_*(\tilde X), \l))$, so
to prove the proposition we only need to verify that
$Hom_{\Z\pi}(C_*(\tilde X), \l)\cong Hom(CL_*,\Z)$.
As observed above, $\l\otimes\l\cong\Z$, hence there is a canonical
isomorphism $\l\cong\l^*$. But then we have
$$
Hom_\Z(C_*(\tilde X)\otimes_{\Z\pi}\l,\Z)\cong Hom_{\Z\pi}(C_*(\tilde X),\l^*)
\cong Hom_{\Z\pi}(C_*(\tilde X),\l)\,,
$$
which is just what we want.
\end{proof}

The exact sequence  $0\to \l\xrightarrow{\cdot\, 2} \l \to \Z/2 \to 0$ gives
rise to a Bockstein homomorphism $\beta: H^k(X,\Z/2)\to H^{k+1}(X,\l)$.
Reducing coefficients mod 2 again produces a cohomology operation
$Sq_-^1: H^k(X,\Z/2)\to H^{k+1}(X,\Z/2)$.
Note that in general $Sq_-^1\ne Sq^1$. For example,
$$Sq^1:H^{k}(\rp\infty,\Z/2)\to H^{k+1}(\rp\infty,\Z/2)$$
is iso for $k$ odd and trivial for $k$ even,   but
$$Sq_-^1:H^{k}(\rp\infty,\Z/2)\to H^{k+1}(\rp\infty,\Z/2)$$
 is iso for $k$ even and trivial for $k$ odd. This follows from the Bockstein
sequence of   $0\to \l\xrightarrow{\cdot 2} \l \to \Z/2 \to 0$ and the
calculation of $H^*(\rp n,\l)$ above.\smallskip

We now have all the ingredients necessary to complete the proof of
Lemma 2.4(i) if $n$ is even. We want to use a version of diagram (2) in
section 2 were (co)homology now is taken with coefficients in $\l$ instead
of $\Z$, so we need to compute $H_n(Q,\l)$ and $H^n(Q,\l)$ and the relevant
homomorphisms.
\smallskip

First we note that $Q$ and $\RPRP$ must have isomorphic homology and
cohomology in low dimensions (less than $2m$). Since $Q$ now is orientable,
Poincar\'e duality gives
$$H_n(Q,\l)\cong H^1(Q,\l)\cong H^1(\RPRP,\l)\cong \Z\oplus\Z/2\,.$$
Hence, by the universal coefficient theorem:
$$H^n(Q,\l)\cong H_n(Q,\l)^*\cong \Z\,.$$

The Hurewicz homomorphism $h_Q:\pi_n(Q)\to H_n(Q,\l)$ is defined by
$ h_Q([f])=f_*([\s n])$.  One has to be a little careful in order to make
this definition functorial, since it involves lifting $f$ to the universal
covering of $Q$, but different choices give the same $h_Q$ up to sign, so
they don't affect the argument.

To prove that the class ${x_1}^n+{x_2}^n\in H^n(Q,\f2)$ is in the image of
$\gamma$, we use the operation $Sq^1_-$ instead of $Sq^1$. Then the proof
goes exactly as before.

\begin{rem}
One little piece of warning: $H^*(X,\l)$ does not have a product, hence there
is no ``Cartan formula'' for $Sq_-^1$.  But $H^*(X,\l)$ is
a  module over $H^*(X,\Z)$. So is $H^*(X,\Z/2)$, and the Bockstein
sequence is a sequence of $H^*(X,\Z)$--module homomorphisms. Therefore
$Sq_-^1$ is also a $H^*(X,\Z)$--module homomorphism. Note, however, that
it can not be a homomorphism of $H^*(X,\Z/2)$--modules, as the calculation
for $\rp\infty$ shows.
\end{rem}

\end{document}